\documentclass[reqno]{amsart}
\usepackage[english]{babel}
\usepackage{amsmath,amssymb}
\usepackage{amsthm,thmtools}
\usepackage{amsfonts}

\usepackage[UKenglish]{isodate}
\cleanlookdateon % if you want to be all European about it :-)
\usepackage{lmodern,microtype}
\usepackage[rgb]{xcolor}
\usepackage{enumitem}
\usepackage{tikz}
\usetikzlibrary{arrows}
\usepackage{hyperref}
%\usepackage{authblk}

% \newcommand{\footremember}[2]{%
%     \footnote{#2}
%     \newcounter{#1}
%     \setcounter{#1}{\value{footnote}}%
% }
% \newcommand{\footrecall}[1]{%
%     \footnotemark[\value{#1}]%
% } 

%%%%%%%%%%%%%%%%%%%%%%%%%%%%%%%%%%%%%%%%%%%%%%%%%%%%%%%%%%%%%%%%%%%%%%%%%%%%%%%%
%% Title and author
%%%%%%%%%%%%%%%%%%%%%%%%%%%%%%%%%%%%%%%%%%%%%%%%%%%%%%%%%%%%%%%%%%%%%%%%%%%%%%%%
\title{Quantum automorphism groups of trees}
\author[Dobben de Bruyn \and Kar \and Roberson \and Schmidt \and Zeman]{Josse van Dobben de Bruyn \and Prem Nigam Kar \and David E.~Roberson \and Simon Schmidt \and Peter Zeman}
% \author[josse]{Josse van Dobben de Bruyn}
% \author[prem]{Prem Nigam Kar}
% \author[david]{David E~.Roberson}
% \author[simon]{Simon Schmidt}
% \author[peter]{Peter Zeman}

\address{Department of Applied Mathematics and Computer Science, Technical~University~of~Denmark, 2800~Kongens~Lyngby, Denmark}
\email{jdob@dtu.dk, pkar@dtu.dk, dero@dtu.dk, pezem@dtu.dk}
\address{QMATH, Department of Mathematical Sciences, University of Copenhagen, Universitetsparken 5, 2100 Copenhagen \O, Denmark}
\email{dero@dtu.dk}
\address{Faculty of Computer Science, Ruhr University Bochum, Universitätsstra{\ss}e 150, 44801 Bochum, Germany}
\email{s.schmidt@rub.de}

% \date{%
%     $^1$Department of Applied Mathematics and Computer Science, Technical~University~of~Denmark, 2800~Kongens~Lyngby, Denmark\\%
%     $^2$QMATH, Department of Mathematical Sciences, University of Copenhagen, Universitetsparken 5, 2100 Copenhagen \O, Denmark\\[2ex]%
%     $^3$Faculty of Computer Science, Ruhr University Bochum, Universitätsstra{\ss}e 150, 44801 Bochum, Germany\\[2ex]%
%     \today}

%  \author[1]{Josse van Dobben de Bruyn\footnote{jdob@dtu.dk}}
%  \author[1]{Prem Nigam Kar\footnote{pkar@dtu.dk}}
%  \author[1,2]{David E.~Roberson\footnote{dero@dtu.dk}}
%  \author[3]{Simon Schmidt\footnote{s.schmidt@rub.de}}
%  \author[1]{Peter Zeman\footnote{pezem@dtu.dk}}

% \affil[1]{Department of Applied Mathematics and Computer Science, Technical~University~of~Denmark, 2800~Kongens~Lyngby, Denmark}
% \affil[2]{QMATH, Department of Mathematical Sciences, University of Copenhagen, Universitetsparken 5, 2100 Copenhagen \O, Denmark}
% \affil[3]{Faculty of Computer Science, Ruhr University Bochum, Universitätsstra{\ss}e 150, 44801 Bochum, Germany}

%%%%%%%%%%%%%%%%%%%%%%%%%%%%%%%%%%%%%%%%%%%%%%%%%%%%%%%%%%%%%%%%%%%%%%%%%%%%%%%%
%% Miscellaneous typesetting
%%%%%%%%%%%%%%%%%%%%%%%%%%%%%%%%%%%%%%%%%%%%%%%%%%%%%%%%%%%%%%%%%%%%%%%%%%%%%%%%
\newcommand{\hair}{\ifmmode\mskip1mu\else\kern0.08em\fi} % hair space; see e.g. http://www.read.seas.harvard.edu/~kohler/latex.html
\frenchspacing

\newcommand{\myautoref}[2]{\hyperref[#2]{\autoref*{#1}\ref*{#2}}}  % \myautoref{prop:1}{itm:a} creates a link [Proposition 1(a)].

% Custom enumerate environment, to have an enumeration that doesn't indent.
\newcounter{myenum}
\newenvironment{my-enumerate}{%
	\setcounter{myenum}{0}%
	\let\mysaveitem=\item%
	\renewcommand{\item}{%
		\par%
		\stepcounter{myenum}%
		(\Roman{myenum}).
	}%
}{%
	\let\item=\mysaveitem%
}

%%%%%%%%%%%%%%%%%%%%%%%%%%%%%%%%%%%%%%%%%%%%%%%%%%%%%%%%%%%%%%%%%%%%%%%%%%%%%%%%
%% TikZ settings and colors
%%%%%%%%%%%%%%%%%%%%%%%%%%%%%%%%%%%%%%%%%%%%%%%%%%%%%%%%%%%%%%%%%%%%%%%%%%%%%%%%
\tikzset{>=stealth',vertex/.style={circle,fill,inner sep=1.3pt}}
\colorlet{c0}{black}
\definecolor{mysalmon}{rgb}{.98,.5,.448}  % "Salmon" color from xcolor svgnames
\definecolor{mydodger}{rgb}{.116,.565,1}  % "Dodgerblue" color from xcolor svgnames
\colorlet{c1}{mysalmon!85!black}
\colorlet{c2}{mydodger!90!black}
\colorlet{mybg}{brown!30!gray!7}

\newcommand{\minitree}[4]{
	\begin{scope}[shift={#1}]
		\draw[#3] (0,0) node[vertex,label=#4] (R#2) {};
		\foreach[count=\tel] \x in {-.45,-.15,.45} {
			\draw (\x,-.5) node[vertex] (v\tel) {};
			\draw (R#2) -- (v\tel);
			%\draw (\x,-.5) ++(-80:5mm) coordinate (r\tel);
			%\draw (\x,-.5) ++(-100:5mm) coordinate (l\tel);
			%\draw (v\tel) -- (l\tel) -- (r\tel) -- (v\tel);
			\foreach \xx in {-.1,0,.1} {
				\draw[dash pattern=on 2pt off 1.4pt] (v\tel) -- ++(\xx,-3.3mm);
			}
		}
		\draw (.15,-.5) node[scale=.8] {$\cdots$};
	\end{scope}
}

\newcommand{\nanotree}[4]{
	\begin{scope}[shift={#1},xscale=.9]
		\draw[#3] (0,0) node[vertex,label=#4] (R#2) {};
		\foreach[count=\tel] \x in {-.3,-.1,.3} {
			\draw (\x,-.5) node[vertex] (v\tel) {};
			\draw (R#2) -- (v\tel);
			%\draw (\x,-.5) ++(-80:5mm) coordinate (r\tel);
			%\draw (\x,-.5) ++(-100:5mm) coordinate (l\tel);
			%\draw (v\tel) -- (l\tel) -- (r\tel) -- (v\tel);
			\foreach \xx in {-.07,0,.07} {
				\draw[dash pattern=on 2pt off 1.4pt] (v\tel) -- ++(\xx,-3.3mm);
			}
		}
		\draw (.1,-.5) node[scale=.6] {$\cdots$};
	\end{scope}
}

%%%%%%%%%%%%%%%%%%%%%%%%%%%%%%%%%%%%%%%%%%%%%%%%%%%%%%%%%%%%%%%%%%%%%%%%%%%%%%%%
%% Theorem styles
%%%%%%%%%%%%%%%%%%%%%%%%%%%%%%%%%%%%%%%%%%%%%%%%%%%%%%%%%%%%%%%%%%%%%%%%%%%%%%%%
%\numberwithin{equation}{section}
\declaretheorem[style=definition,numberwithin=section]{definition}
\declaretheorem[style=plain,numberlike=definition]{theorem}
\declaretheorem[style=plain,numberlike=definition]{lemma}
\declaretheorem[style=plain,numberlike=definition]{proposition}
\declaretheorem[style=plain,numberlike=definition]{corollary}
\declaretheorem[style=definition,numberlike=definition]{remark}

\newcommand{\mbbS}{\mathbb{S}}
\newcommand{\mbbG}{\mathbb{G}}
\newcommand{\mbbH}{\mathbb{H}}

\DeclareMathOperator{\Qut}{Qut}

\newcommand{\tensor}{\mathbin{\otimes}}

%%%%%%%%%%%%%%%%%%%%%%%%%%%%%%%%%%%%%%%%%%%%%%%%%%%%%%%%%%%%%%%%%%%%%%%%%%%%%%%%
%% Special (math) symbols
%% 
%% Import certain symbols from other fonts without loading the whole font/package,
%% because loading the package might mess up the appearance in other places.
%% See e.g. https://tex.stackexchange.com/a/14388 .
%%%%%%%%%%%%%%%%%%%%%%%%%%%%%%%%%%%%%%%%%%%%%%%%%%%%%%%%%%%%%%%%%%%%%%%%%%%%%%%%
% Double-struck digit 1, imported from bbold font (from the package bbold).
\DeclareSymbolFont{bbold}{U}{bbold}{m}{n}
\DeclareSymbolFontAlphabet{\mathbbold}{bbold}
\newcommand{\one}{\ensuremath{\mathbbold{1}}}

% Big asterisk, imported from mathb font (from the package mathabx).
% (mathabx calls this symbol \Asterisk, but I prefer \bigast)
\DeclareFontFamily{U}{mathb}{\hyphenchar\font45}
\DeclareFontShape{U}{mathb}{m}{n}{
<5> <6> <7> <8> <9> <10> gen * mathb
<10.95> mathb10 <12> <14.4> <17.28> <20.74> <24.88> mathb12
}{}
\DeclareSymbolFont{mathb}{U}{mathb}{m}{n}
\DeclareMathSymbol{\bigast}{2}{mathb}{"06}

%%%%%%%%%%%%%%%%%%%%%%%%%%%%%%%%%%%%%%%%%%%%%%%%%%%%%%%%%%%%%%%%%%%%%%%%%%%%%%%%
%% Temporary colors (remove before publication)
%%%%%%%%%%%%%%%%%%%%%%%%%%%%%%%%%%%%%%%%%%%%%%%%%%%%%%%%%%%%%%%%%%%%%%%%%%%%%%%%
\colorlet{Josse_color}{orange!70!black}
\colorlet{Prem_color}{red!80!black}
\colorlet{David_color}{purple!80!black}
\colorlet{Simon_color}{blue!80!black}
\colorlet{Peter_color}{green!75!black}

%%%%%%%%%%%%%%%%%%%%%%%%%%%%%%%%%%%%%%%%%%%%%%%%%%%%%%%%%%%%%%%%%%%%%%%%%%%%%%%%
%% Actual document begins HERE
%%%%%%%%%%%%%%%%%%%%%%%%%%%%%%%%%%%%%%%%%%%%%%%%%%%%%%%%%%%%%%%%%%%%%%%%%%%%%%%%
\begin{document}

\begin{abstract}
	We give a characterisation of quantum automorphism groups of trees.
	In particular, for every tree, we show how to iteratively construct its quantum automorphism group using free products and free wreath products.
	This can be considered a quantum version of Jordan's theorem for the automorphism groups of trees.
	This is one of the first characterisations of quantum automorphism groups of a natural class of graphs with quantum symmetry.
\end{abstract}

\maketitle

\section{Introduction}

Attempts to characterise automorphism groups of trees can be traced back to Jordan~\cite{Jordan-center} and P\'{o}lya~\cite{polya_trees}.
It is well known that the class of automorphism groups of trees can be constructed from the trivial group using two types of group products:
the direct product and the wreath product with a symmetric group $\mathbb{S}_n$.
In this paper, we prove an analogue of this result for \emph{quantum} automorphism groups of trees.

Banica~\cite{banica_qut} and Bichon~\cite{QBic} defined the quantum automorphism group of a finite graph to be a quotient of the quantum symmetric group $\mathbb{S}_n^+$, introduced by Wang~\cite{wang}.
The theory of quantum automorphism groups of graphs is still a relatively young field and we refer to~\cite{Schmidt-dissertation} for a survey of the state of the art. Most of the known results answer the question of whether a given graph has quantum symmetry, i.e., whether the graph's quantum automorphism group differs from its automorphism group.
Explicit calculations are known only for some very special families of graphs.
These include the hypercube graph~\cite{banica_cube} and the folded hypercube graph~\cite{simon_folded_cube}.
Recently, Gromada~\cite{gromada_qut} formulated a general technique for determining the quantum automorphism groups of graphs that arise as Cayley graphs of abelian groups.
This technique can be used to calculate the quantum automorphism group of the halved hypercube graph, folded hypercube graph, and Hamming graph.

Trees are the one of the most basic graph classes. However, there have not been many attempts to calculate quantum automorphism groups of trees explicitly.
Fulton~\cite{Fulton-dissertation} proved that trees with automorphism groups isomorphic to $\mathbb{S}_2^k$, for $k\geq 2$, have quantum symmetry.
In fact, almost all graphs do not have quantum symmetry~\cite{Lupini-Mancinska-Roberson}, while almost all trees do have quantum symmetry~\cite{simon_all_trees}, which makes the study of quantum automorphism groups of trees particularly interesting.

We prove that the quantum automorphism of a tree can be constructed from the trivial quantum group using two types of products:
the free product and the free wreath product with a quantum symmetric group $\mathbb{S}_n^+$.
This is formalized by the following theorem.

\begin{theorem}\label{thm:main_theorem}
	The class $\mathcal{T}$ of all quantum automorphism groups of trees can be constructed inductively as follows:
	\begin{enumerate}[label = (\roman*)]
		\item $\one \in \mathcal{T}$.
		\item If $\mbbG, \mbbH \in \mathcal{T}$, then $\mbbG \ast \mbbH \in \mathcal{T}$.
		\item If $\mbbG \in \mathcal{T}$, then $\mbbG \wr_* \mbbS_n^+ \in \mathcal{T}$.
	\end{enumerate}
\end{theorem}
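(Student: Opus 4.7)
The plan is to proceed by strong induction on $|V(T)|$, mirroring the classical proof of Jordan's theorem with direct products replaced by free products and wreath products by free wreath products. The base case $|V(T)| = 1$ is immediate: the one-vertex tree has $\Qut(T) = \one \in \mathcal{T}$.

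For the inductive step, I would exploit the canonical \emph{center} of a tree (classically due to Jordan): every tree has either a unique central vertex $c$ minimizing the eccentricity $\ecc$, or two adjacent central vertices forming a central edge $\{u,v\}$. Since any quantum automorphism preserves the adjacency matrix, it preserves graph distances and hence the eccentricity function, so the center must be preserved set-wise by the magic unitary. This splits the argument into two cases.

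In the \textbf{central vertex} case, the magic unitary entries $u_{c,v} = u_{v,c} = 0$ for all $v \ne c$, so $c$ is quantum-fixed. Deleting $c$ yields a rooted forest whose components are the subtrees $T_1,\ldots,T_m$ hanging at the neighbors of $c$. Partition these into rooted-isomorphism classes with multiplicities $n_1,\ldots,n_k$ and quantum-symmetry-group representatives $\mathbb{H}_1,\ldots,\mathbb{H}_k$. The decomposition I would establish is
\[
\Qut(T) \;\cong\; (\mathbb{H}_1 \wr_* \mbbS_{n_1}^+) \ast \cdots \ast (\mathbb{H}_k \wr_* \mbbS_{n_k}^+).
\]
Each $\mathbb{H}_i$ arises from a strictly smaller tree, so by induction $\mathbb{H}_i \in \mathcal{T}$, and rules (ii)--(iii) then place $\Qut(T)$ in $\mathcal{T}$. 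In the \textbf{central edge} case, let $T_u, T_v$ be the two components of $T - uv$, rooted at $u, v$. If $T_u \cong T_v$ as rooted trees then the two halves can be quantum-swapped and $\Qut(T) \cong \mathbb{H} \wr_* \mbbS_2^+$ where $\mathbb{H}$ handles $T_u$; otherwise $u, v$ are individually quantum-fixed and $\Qut(T)$ decomposes as a free product of two pieces, each handled by essentially the central-vertex analysis. Both sub-cases close via (ii)--(iii).

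The main obstacle is the decomposition lemma itself: showing that $\Qut(T)$ (or, more generally, the quantum automorphism group of a rooted forest) really does factor as the prescribed free product of free wreath products, with no extra quantum symmetries sneaking in. Classically this is a one-line orbit argument, but quantumly one must prove (a) that magic unitary entries $u_{x,y}$ vanish whenever $x, y$ lie in rooted subtrees of different isomorphism type, and (b) that the block acting on $n_i$ isomorphic copies really satisfies the universal property of the free wreath product $\mathbb{H}_i \wr_* \mbbS_{n_i}^+$, rather than some quotient or enlargement. Step (b) is the genuinely quantum heart of the argument and will likely invoke (or require strengthening of) prior free-wreath-product results for quantum automorphism groups of disjoint unions of copies of a connected graph, suitably adapted to the ``rooted'' setting where all roots are collectively fixed.
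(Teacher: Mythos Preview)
Your decomposition formula and overall strategy match the paper's, but there are two genuine gaps.

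\textbf{The induction does not close.} In the central-vertex case you write $\Qut(T) \cong (\mathbb{H}_1 \wr_* \mbbS_{n_1}^+) \ast \cdots \ast (\mathbb{H}_k \wr_* \mbbS_{n_k}^+)$ and then say ``Each $\mathbb{H}_i$ arises from a strictly smaller tree, so by induction $\mathbb{H}_i \in \mathcal{T}$.'' But $\mathbb{H}_i$ is the quantum automorphism group of a \emph{rooted} tree $T_i$ (the root being the neighbour of $c$), not of the underlying tree. In general $\Qut_r(T_i) \not\cong \Qut(T_i)$, so your induction hypothesis on unrooted trees does not apply. The same issue bites in both sub-cases of the central-edge case: your $\mathbb{H}$ and your ``two pieces'' are again rooted-tree quantum automorphism groups. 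The paper confronts this head-on: it introduces the class $\mathcal{R}$ of quantum automorphism groups of rooted trees, runs the decomposition/induction entirely inside $\mathcal{R}$, and then proves separately that $\mathcal{T} = \mathcal{R}$. The inclusion $\mathcal{T} \subseteq \mathcal{R}$ comes from rootification (your center argument essentially gives this). The reverse inclusion $\mathcal{R} \subseteq \mathcal{T}$ is nontrivial: given a rooted tree $(T,r)$, one must construct an \emph{unrooted} tree $T'$ with $\Qut(T') \cong \Qut_r(T)$, and the paper does this by attaching a long path to force the Jordan center onto the path (their Lemma~6.5). Without this step, your induction collapses.

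\textbf{The reverse inclusion is missing.} The theorem asserts that $\mathcal{T}$ \emph{equals} the class $\mathcal{S}$ generated by (i)--(iii), so one must also show that $\mathcal{T}$ is closed under free product and free wreath product with $\mbbS_n^+$. You only argue $\mathcal{T} \subseteq \mathcal{S}$. The paper devotes roughly half of its proof to $\mathcal{S} \subseteq \mathcal{T}$ (equivalently $\mathcal{S} \subseteq \mathcal{R}$), explicitly building rooted trees realising $\mathbb{G} \ast \mathbb{H}$ and $\mathbb{G} \wr_* \mbbS_n^+$; the $\mathbb{G} \ast \mathbb{G}$ case in particular requires a small trick (attaching an extra edge to one copy) to make the two subtrees non-isomorphic so that Lemma~6.4 applies.

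On the technical obstacles (a) and (b) you flag: the paper dispatches (a) by showing that quantum isomorphism coincides with isomorphism for trees (via fractional isomorphism and Weisfeiler--Leman), and for (b) simply invokes the Banica--Bichon free-wreath-product theorem for coloured graphs rather than reproving it, so the ``genuinely quantum heart'' you anticipate is outsourced.
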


The proof is constructive and gives a general polynomial-time algorithm that can be used to explicitly calculate the quantum automorphism group of any tree.

The proof is organized as follows.
First, we prove that the center of a tree is preserved by quantum automorphisms and use this to reduce the problem to finding quantum automorphism groups of rooted trees.
Intuitively, operation $(ii)$ of \autoref{thm:main_theorem} then corresponds to the quantum symmetries of the disjoint union of non-isomorphic subtrees and operation $(iii)$ corresponds to the quantum symmetries of the disjoint union of isomorphic subtrees.

\section{Preliminaries}

We assume that all graphs are finite and simple.
The vertex set of a graph $X$ is denoted by $V(X)$ and the edge set by $E(X)$.
For a vertex $i \in V(X)$, let $N(i)$ denote the set of all neighbours of $i$.
For two subsets $A,B \subseteq V(X)$, let $E(A,B)$ be the set of edges in $E(X)$ with one endpoint in $A$ and the other in $B$, and write $e(A,B) := |E(A,B)|$.
For singleton sets, we write $E(i,B)$ instead of $E(\{i\},B)$ and so on.
For a subset $A \subseteq V(X)$ and a vertex $i \in A$, write $d_A(i) := e(i,A)$.
When $A = V(X)$, we simply write $d(i)$ instead of $d_A(i)$. We will refer to a vertex of degree one in any graph (not just trees) as a \emph{leaf}.
%A~coloring of $X$ is a mapping from $V(X)$ to an initial segment of $\mbbN$.

\begin{definition}
	\label{def:rooted-tree}
	%A \emph{rooted tree} is a tree $R$ with one of its vertices $r$ designated as its \emph{root}. All vertices other than the root have the same color, say $c_0$, while the root has a distinct color, say $c_1$ that does not appear in the rest of the graph.
	A \emph{rooted tree} is a tuple $(T,r)$ where $T$ is a tree and $r \in V(T)$ is a designated vertex, called the \emph{root}.
    A \emph{forest of rooted trees} is a forest in which every connected component is a rooted tree.
\end{definition}

\begin{definition}
	\label{def:colored-graph}
	A \emph{(vertex-)colored graph} is a tuple $(X,c)$ where $X$ is a graph and $c : V(X) \to C$ is a function that assigns to each vertex $x \in V(X)$ a color $c(x)$ from some set of colors $C$. The color classes partition the vertex set, but we do not require that adjacent vertices receive distinct colors.
	
	Every (uncolored) graph will be thought of as a colored graph where all vertices have the same color, and every forest of rooted trees will be thought of as a colored graph where all roots have color $c_1$ and all non-roots have color $c_0 \neq c_1$.
\end{definition}

\begin{definition}
	A \emph{compact quantum group} $\mathbb{G}$ is a tuple $(\mathcal{A},\Delta)$, where $\mathcal{A}$ is a unital $C^*$\nobreakdash-algebra and $\Delta: \mathcal{A} \to \mathcal{A} \otimes \mathcal{A}$ is a unital $*$\nobreakdash-homomorphism satisfying the following conditions:
	\begin{enumerate}
		\item $(\Delta \otimes id) \Delta = (id \otimes \Delta) \Delta$;
		\item $\Delta(\mathcal{A})(1 \otimes \mathcal{A}) = \Delta(\mathcal{A})(\mathcal{A} \otimes 1)$ is dense in $A \otimes A$. 
	\end{enumerate}    
    The map $\Delta$ is called the \emph{comultiplication} of $\mathbb{G}$.
\end{definition}

Let $G$ be a compact classical group and let $C(G)$ be the commutative $C^*$\nobreakdash-algebra of continuous complex-valued functions on $G$. Since $C(G) \otimes C(G) \cong C(G \times G)$, we may define $\Delta: C(G) \to C(G)\otimes C(G)$ by $\Delta(f)(g \times h) = f(gh)$.
Then, $(C(G), \Delta)$ is a compact quantum group. Moreover, all compact quantum groups $\mathbb{G}$ where the $C^*$\nobreakdash-algebra is commutative are of this type. In view of this example, it is customary to denote the $C^*$\nobreakdash-algebra $\mathcal{A}$ associated with a compact quantum group $\mathbb{G} = (\mathcal{A},\Delta)$ as $C(\mathbb{G})$. The \emph{trivial quantum group} $\one$ is the trivial group $1$, viewed as a quantum group. The next proposition allows us to introduce non-trivial examples of compact quantum groups.

\begin{definition}
   Let $C(\mathbb{G})$ be a unital C$^*$-algebra generated by $\{u_{ij}:i,j\in [n]\}$ that satisfy the relations defined by the matrices $[u_{ij}]_{i,j=1}^n$ and $[u_{ij}^*]_{i,j=1}^n$ being invertible. Assume that $\Delta: C(\mathbb{G}) \to C(\mathbb{G}) \otimes C(\mathbb{G})$ is a unital $*$-homomorphism satisfying \begin{equation}\label{eqdef:comult-map}
    \Delta(u_{ij}) = \sum_{k=1}^n u_{ik}\otimes u_{kj}, \qquad i,j\in [n].
\end{equation}
Then, $\mathbb{G} = (C(\mathbb{G}), \Delta)$ forms a compact quantum group (see {\cite[Proposition 1.1.4]{neshveyev_tuset}}). Such compact quantum groups are knows as \emph{compact matrix quantum groups}. They are usually specified by the pair $\mathbb{G} = (C(\mathbb{G}), u)$, as the comultiplication only depends on $u$. The matrix $u$ will be called the \emph{fundamental representation} of the compact matrix quantum group $\mathbb{G}$.
\end{definition}

We now define the compact matrix quantum groups that are of interest to us.

\begin{definition}
	\label{def:Sn+}
	The \emph{quantum symmetric group} $\mathbb{S}_n^+=(C(\mathbb{S}_n^+), u)$ is the compact matrix quantum group, where $C(\mathbb{S}_n^+)$ is the universal C*-algebra generated by the generators $u_{ij}$ satisfying
	\begin{equation}
		\begin{aligned}\label{eq:qpercon}
			u_{ij}^2 = u_{ij} = u_{ij}^* & \ \text{ for all } \ i,j =1,...,n\\
			\sum_{j=1}^n u_{ij} = \sum_{i=1}^n u_{ij} = 1 & \ \text{ for all } \ i, j =1,...,n.\\
		\end{aligned}
	\end{equation} 
 A matrix satisfying the conditions in \autoref{eq:qpercon} is called a \emph{magic unitary}. 
\end{definition}

\begin{definition}
    Let $\mathbb{G} = (C(\mathbb{G}), \Delta_{\mathbb{G}})$ and $\mathbb{H} = (C(\mathbb{H}), \Delta_{\mathbb{H}})$ be compact quantum groups. We say that $\mathbb{G}$ is a \emph{quantum subgroup} of $\mathbb{H}$, if there is a surjective $\ast$-homomorphism $\phi: C(\mathbb{H}) \to C(\mathbb{G})$ such that $\Delta_{\mathbb{G}}\circ \phi = (\phi \otimes \phi) \circ \Delta_{\mathbb{H}}$. Quantum subgroups of the quantum symmetric group $S_n^+$ are known as \emph{quantum permutation groups}. 
\end{definition}

If $\mathbb{G} = (C(\mathbb{G}), u)$ is any compact matrix quantum group where $u \in M_n(C(\mathbb{G}))$ is a magic unitary, we see from the universal property of $S_n^+$ that $\mathbb{G}$ is a quantum permutation group.

\begin{definition}
	\label{def:Qut}
	We give the definitions of quantum automorphism groups of various kinds of graphs:
	\begin{enumerate}[label = (\Roman*)]
		\item The \emph{quantum automorphism group $\Qut(X)$ of a graph $X$} is the compact matrix quantum group $(C(\Qut(X)), u)$, where $C(\Qut(X))$ is the universal $C^*$-algebra with generators $u_{ij}$, for $i,j \in V(X)$, and relations
		\begin{center}
			\begin{equation}\label{eq:qg_def}
				\begin{aligned}
					u_{ij} = u_{ij}^*  &= u_{ij}^2, &  i,j \in V(X), & \\
					\sum_{k} u_{ik} = 1 &= \sum_{k} u_{kj}, & i,j \in V(X), & \\
					u_{ij}u_{kl} & = 0 & \text{ if } ik\in E, jl\notin E \text{ or vice versa}. &
				\end{aligned}
			\end{equation}
		\end{center}
		Assuming the first two conditions of~\eqref{eq:qg_def}, the third condition is equivalent to $A_X  u = u A_X $, which can be rewritten as
		\begin{equation}
			\sum_{kj \in E} u_{ik} = \sum_{ik \in E} u_{kj},
		\end{equation}
		for each $i,j$.
		
		\item{The \emph{quantum automorphism group $\Qut_c(X)$ of a graph $X$ with coloring $c$} is the compact matrix quantum group that is obtained by taking the quotient of $C(\Qut(X))$ by the relations $u_{ij} = 0$ if $c(i) \neq c(j)$. This is equivalent to adding the relations $D^aU = uD^a$ for every vertex color $a$, where $D^a$ is a diagonal matrix with $D^a_{ii} = 1$ if $i$ has color $a$ and $D^a_{ii} = 0$ otherwise.}
		
		\item{ The \emph{quantum automorphism group of a rooted tree $R$} is $\Qut_c(R)$. We shall denote $\Qut_c(R)$ as $\Qut_r(R)$ for convenience.}
	\end{enumerate}
\end{definition}

%Let $\tree$ denote the class of all trees.
%For a class of graphs $\mathcal{C}$, let $\Qut(\mathcal{C})$ be the class of quantum groups that can be realised as the quantum automorphism group of a graph in $\mathcal{C}$.

\begin{definition}
	Let $\mathbb{G} = (C(\mathbb{G}), u)$ and $H = (C(\mathbb{H}), v)$ be compact matrix quantum groups. Then, their \emph{free product} $\mathbb{G} \ast \mathbb{H}$ is defined as the compact matrix quantum group $(C(\mathbb{G}) \ast C(\mathbb{H}), u \oplus v)$, where $C(\mathbb{G}) \ast C(\mathbb{H})$ is the universal C*-algebra with generators $u_{ij}$ and $v_{kl}$ such that $u_{ij}$ and $v_{kl}$ satisfy the relations of $\mathbb{G}$ and $\mathbb{H}$ respectively, in addition to the relation $1_{\mathbb{G}} = 1_{\mathbb{H}}$.
\end{definition}

\begin{definition}
	\label{def:free-wreath-product}
	Let $\mathbb{G}=(C(\mathbb{G}), u)$ and $\mathbb{H}=(C(\mathbb{H}), v)$ be two quantum permutation groups such that their fundamental representations are magic unitaries. The \emph{free wreath product of $\mathbb{G}$ and $\mathbb{H}$} is the quantum permutation group
	$$C(\mathbb{G} \wr_* \mathbb{H}) = (C(\mathbb{G})^{*n} \ast C(\mathbb{H}))/ \langle [u_{ij}^{(a)}, v_{ab}] = 0 \rangle$$
	where $n \times n$ is the size of matrix $v$. The fundamental representation of $\mathbb{G} \wr_* \mathbb{H}$ is given by the magic unitary
	$$w_{ia, jb} = u_{ij}^{(a)}v_{ab}.$$
\end{definition}

A closely related concept to quantum automorphism groups of graphs is quantum isomorphism of graphs. This notion was originally defined in terms of quantum strategies for the so-called \emph{isomorphism game}~\cite{atserias2019quantum}. However, for this work the following equivalent definition from~\cite{Lupini-Mancinska-Roberson} is better suited\footnote{In both~\cite{atserias2019quantum} and~\cite{Lupini-Mancinska-Roberson} only uncolored graphs were considered, but adapting their results to include vertex colors is trivial.}:

\begin{definition}
    Let $X$ and $Y$ be (possibly colored) graphs. We say that $X$ and $Y$ are \emph{quantum isomorphic}, and write $X \cong_q Y$ if there is a nonzero unital $C^*$-algebra $\mathcal{A}$ and a magic unitary $u = [u_{xy}]_{x \in V(X), y \in V(Y)}$ with entries from $\mathcal{A}$ such that $A_Xu = uA_Y$ and $u_{xy} = 0$ if $c(x) \ne c(y)$.
\end{definition}

It is clear that if two graphs are isomorphic, then they are quantum isomorphic (the magic unitary in the definition above can be taken to be the permutation matrix encoding the isomorphism). But it is a nontrivial result that there are non-isomorphic graphs that are quantum isomorphic~\cite{atserias2019quantum}. However, as we will see in Section~\ref{sec:orbsncolors}, isomorphism and quantum isomorphism coincide for forests.

\section{Quantum orbits and color refinement}\label{sec:orbsncolors}

\begin{definition}
    Let $\mathbb{G} = (C(\mathbb{G}), u)$ be a quantum permutation group. We define a relation $\sim$ on $[n]$ by $i \sim j$ if and only if $u_{ij} \neq 0$. In~\cite{banica2018modeling} and~\cite{Lupini-Mancinska-Roberson} it was shown that $\sim$ is an equivalence relation. The partitions of $[n]$ induced by the equivalence relation $\sim$ are known as the \emph{orbits} or \emph{quantum orbits} of $\mathbb{G}$.
\end{definition}

\begin{remark}
    Note that by definition of the quantum automorphism group of a colored graph, vertices of different colors must be in different quantum orbits. In other words, the color classes of a colored graph are unions of quantum orbits.
\end{remark}

It follows from Theorem 4.5 of~\cite{Lupini-Mancinska-Roberson} and Corollary 7.5 of~\cite{atserias2019quantum} that computing the orbits of a quantum permutation group is undecidable in general. However, there are known and efficiently computable partitions of graphs that are guaranteed to be coarse-grainings of the partition into quantum orbits. Such partitions are useful in the study of quantum automorphism groups as they provide necessary conditions for vertices of a graph $X$ to be in the same orbit of $\Qut(X)$. The simplest such method is known as \emph{color-refinement} (see for instance~\cite{grohe2017color}). This begins by labeling every vertex of a colored graph by its color. The labeling is then iteratively refined by appending to each vertex's label the multiset of the labels of its neighbors. This iterative refinement necessarily stabilizes (i.e.~the partition induced by the labeling stops changing) after at most $|V(X)|$ steps. This final partition is the output of the algorithm. Color-refinement is also sometimes referred to as the 1-dimensional Weisfeiler-Leman algorithm. For uncolored graphs, the algorithm begins by labeling every vertex the same. Note then that after one iteration every vertex is labeled by its degree.

In~\cite{Lupini-Mancinska-Roberson}, it was shown that the partition of $V(X) \times V(X)$ found by the 2-dimensional Weisfeiler-Leman algorithm is always a (possibly trivial) coarse-graining of the orbits of $\Qut(X)$ on $V(X) \times V(X)$\footnote{Orbits of $\Qut(X)$ on $V(X) \times V(X)$ are not needed for this work, so we refrain from giving a definition. Thus, outside of this paragraph our orbits will always refer to orbits on $V(X)$.}. Every orbit of $\Qut(X)$ on $V(X) \times V(X)$ is either contained in or disjoint from $D(X) := \{(i,i): i \in V(X)\}$, and the orbits of the former type are precisely the orbits of $\Qut(X)$ on $V(X)$ (under the identification $i \mapsto (i,i)$). Additionally, every element of the partition of $V(X) \times V(X)$ found by the 2-dimensional Weisfeiler-Leman algorithm is also either contained in or disjoint from $D(X)$, and moreover the parts of the former type are form a partition of $V(X)$ which is a (possibly trivial) refinement of the partition found by color-refinement. It follows immediately from this that the partition found by color-refinement is a (possibly trivial) coarse-graining of the partition of $V(X)$ into the orbits of $\Qut(X)$. We formally state this in the lemma below:

\begin{lemma}\label{lem:colorrefineorbits}
    Let $X$ be a (possibly colored) graph and suppose that $P_1, \ldots, P_r \subseteq V(X)$ is the partition of $V(X)$ found by color-refinement. If $i \in P_l$ and $j \in P_k$ for $l \ne k$, then $u_{ij} = 0$, i.e., $i$ and $j$ are in different orbits of $\Qut_c(X)$.
\end{lemma}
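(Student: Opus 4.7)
My plan is to prove the lemma by induction on the number $t$ of rounds of color-refinement. Let $P^t_1, \ldots, P^t_{r_t}$ denote the partition of $V(X)$ after $t$ rounds, and for each class $P^t_\alpha$ let $D^\alpha$ be the diagonal indicator matrix. The statement ``$u_{ij} = 0$ whenever $i,j$ lie in different round-$t$ classes'' is equivalent to ``$u$ commutes with every $D^\alpha$'', so I will maintain the commutation form of this assertion as the induction hypothesis.

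For the base case $t = 0$, the partition is given by the input coloring, and $u$ commutes with the color-indicator diagonals by the defining relations of $\Qut_c(X)$. For the inductive step, assume $u$ commutes with the round-$t$ indicators $D^\alpha$ as well as with $A_X$. Then $u$ also commutes with $M := A_X D^\beta$ for every $\beta$, and the row-sum at row $i$ is the scalar $(M\mathbf{1})_i = |N(i) \cap P^t_\beta|$. The key observation is this: whenever $M$ is a scalar matrix satisfying $Mu = uM$, applying both sides to the all-ones vector $\mathbf{1}$ and using $u\mathbf{1} = \mathbf{1}$ gives $u(M\mathbf{1}) = M\mathbf{1}$; writing $s_i := (M\mathbf{1})_i$, multiplying on the left by $u_{ij}$, and invoking the row-orthogonality $u_{ij}u_{ij'} = \delta_{jj'} u_{ij}$ of a magic unitary yields $(s_i - s_j)\, u_{ij} = 0$. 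Applied to $M = A_X D^\beta$, this forces $|N(i) \cap P^t_\beta| = |N(j) \cap P^t_\beta|$ for every $\beta$ whenever $u_{ij} \neq 0$, which is exactly the condition for $i$ and $j$ to receive the same refined label at round $t+1$. Thus $u$ commutes with the new indicator diagonals, closing the induction.

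The main subtlety is the scalar-extraction step, which uses the magic unitary axioms inside a noncommutative $C^*$-algebra; once it is in place, the induction is routine and terminates after at most $|V(X)|$ rounds. A more concise alternative would be to invoke the $2$-dimensional Weisfeiler--Leman result from~\cite{Lupini-Mancinska-Roberson} mentioned in the paragraph preceding the lemma and specialize it to the diagonal $D(X) \subseteq V(X) \times V(X)$; however, the direct inductive argument above has the virtue of being self-contained and of illustrating why the magic unitary structure is precisely what makes classical color-refinement compatible with quantum orbits.
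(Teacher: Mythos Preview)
Your proof is correct. The induction on rounds of color-refinement is sound, and the ``scalar-extraction'' step is exactly right: from $uM = Mu$ and $u\mathbf{1} = \mathbf{1}$ you get $\sum_{j'} u_{ij'}s_{j'} = s_i$, and left-multiplying by $u_{ij}$ together with $u_{ij}u_{ij'} = \delta_{jj'}u_{ij}$ gives $(s_i - s_j)u_{ij} = 0$. One small phrasing point: having $|N(i)\cap P^t_\beta| = |N(j)\cap P^t_\beta|$ for all $\beta$ is not by itself ``exactly the condition'' for $i$ and $j$ to share a round-$(t{+}1)$ label---they must also share a round-$t$ label---but you already have that from the induction hypothesis, so no harm is done.

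The paper takes the route you describe at the end of your proposal: it does not prove the lemma directly but derives it from the $2$-dimensional Weisfeiler--Leman result of~\cite{Lupini-Mancinska-Roberson}, by restricting the orbit partition on $V(X)\times V(X)$ to the diagonal $D(X)$. Your argument is a genuine alternative: it is self-contained, avoids the heavier $2$-WL machinery, and makes transparent that only the magic-unitary axioms and $A_Xu = uA_X$ are needed. The paper's approach buys brevity and situates the lemma within a known hierarchy; yours buys independence from outside results and would, for instance, go through unchanged for any structure whose ``adjacency'' is encoded by a family of scalar matrices commuting with $u$.
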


\begin{remark}\label{rem:degsorbs}
    Since vertices of a (possibly colored) graph $X$ of different degrees already receive distinct labels after one iteration of color-refinement, such vertices are necessarily in different parts of the partition found by color-refinement and therefore in different orbits of $\Qut(X)$.
\end{remark}

Color-refinement can also be used as an isomorphism test. Given graphs $X$ and $Y$, one can run color-refinement on their disjoint union. If the resulting partition has a part that contains a different number of vertices of $X$ and $Y$, then we can conclude that $X$ and $Y$ are not isomorphic. If every part of the resulting partition contains an equal number of vertices from $X$ and $Y$, then the graphs are said to be \emph{fractionally isomorphic} and this has many equivalent formulations (e.g., the existence of a doubly stochastic matrix $D$ satisfying $A_XD = DA_Y$). It follows from Theorem~4.5 from~\cite{atserias2019quantum} that if two graphs are quantum isomorphic, then they are fractionally isomorphic. Additionally, it is known~\cite{Immerman-Lander-1990} that any two trees are fractionally isomorphic if and only if they are isomorphic. Therefore we have the following:

\begin{lemma}\label{lem:qiso2iso}
    If $F$ and $F'$ are trees, then they are quantum isomorphic if and only if they are isomorphic.
\end{lemma}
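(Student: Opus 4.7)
The plan is to prove the two directions separately, with one being essentially trivial and the other assembling two results already cited in the paragraph preceding the lemma.

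For the "if" direction, suppose $\pi : V(F) \to V(F')$ is an isomorphism of (possibly colored) trees. Let $P_\pi$ be the corresponding permutation matrix, viewed as a magic unitary with entries in the unital $C^*$-algebra $\mathbb{C}$. Then $A_F P_\pi = P_\pi A_{F'}$ by definition of isomorphism, and $(P_\pi)_{xy} = 0$ whenever $c(x) \neq c(y)$ since $\pi$ preserves colors. Hence $F \cong_q F'$.

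For the "only if" direction, assume $F \cong_q F'$. The two facts flagged just above the lemma statement give the conclusion directly. First, by Theorem~4.5 of \cite{atserias2019quantum}, quantum isomorphic graphs are fractionally isomorphic, so there is a doubly stochastic matrix $D$ with $A_F D = D A_{F'}$ (and $D_{xy} = 0$ when $c(x) \neq c(y)$ in the colored setting, since color-refinement on $F \sqcup F'$ respects the colored structure). Second, the Immerman--Lander theorem \cite{Immerman-Lander-1990} states that fractionally isomorphic trees are isomorphic. Together these yield $F \cong F'$.

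There is no substantive obstacle: the proof is a two-line invocation of the cited results. The only subtlety, if the statement is read as applying to colored trees or to forests (as the notation $F, F'$ and the surrounding discussion suggest), is checking that the color-refinement hypothesis is preserved by the reductions. This is immediate: color-refinement on a disjoint union automatically separates vertices of different colors at step zero, and the Immerman--Lander argument (which is essentially that color-refinement distinguishes non-isomorphic trees) goes through unchanged when the initial labels include the given vertex colors. Consequently, the proof of the lemma is just a short paragraph combining these two references.
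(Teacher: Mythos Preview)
Your proposal is correct and mirrors the paper's argument exactly: the paper derives the lemma from the same two cited facts (quantum isomorphism implies fractional isomorphism via \cite{atserias2019quantum}, and fractional isomorphism coincides with isomorphism for trees via \cite{Immerman-Lander-1990}), with the forward direction being trivial.
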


In addition to the above, we will need to use the following property of orbits of $\Qut_c(X)$ which as far as we are aware is a new result.

\begin{lemma}\label{lem:inducedorbits}
    Let $X$ be a (possibly colored) graph and suppose that $S \subseteq V(X)$ is a union of orbits of $\Qut_c(X)$. If $Y$ is the (possibly colored) subgraph of $X$ induced by $S$, then the orbits of $\Qut_c(Y)$ are unions of orbits of $X$.
\end{lemma}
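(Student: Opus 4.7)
The plan is to exhibit the restricted magic unitary $u|_S$ as a magic unitary witnessing the quantum automorphisms of $Y$, and then use universality to obtain a $*$-homomorphism $C(\Qut_c(Y)) \to C(\Qut_c(X))$ that sends generators to generators. Orbit containment will then follow immediately.

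Concretely, let $u = [u_{ij}]_{i,j\in V(X)}$ be the fundamental representation of $\Qut_c(X)$. Since $S$ is a union of quantum orbits of $\Qut_c(X)$, we have $u_{ij} = 0$ whenever exactly one of $i,j$ lies in $S$. First, I would check that the submatrix $u|_S := [u_{ij}]_{i,j \in S}$ is a magic unitary: the entries are already projections, and the row/column sum relation $\sum_{k \in V(X)} u_{ik} = 1$ collapses to $\sum_{k \in S} u_{ik} = 1$ for $i \in S$ (and similarly for columns), because the cross terms vanish. Second, I would verify $A_Y u|_S = u|_S A_Y$, where $A_Y$ is the adjacency matrix of the induced subgraph $Y$. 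For $i,j \in S$,
\[
(A_Y u|_S)_{ij} = \sum_{k \in S,\, ik \in E(Y)} u_{kj} = \sum_{k \in S,\, ik \in E(X)} u_{kj} = \sum_{k \in V(X),\, ik \in E(X)} u_{kj} = (A_X u)_{ij},
\]
where the third equality uses $u_{kj} = 0$ for $k \notin S$, $j \in S$. A symmetric computation gives $(u|_S A_Y)_{ij} = (u A_X)_{ij}$, and these agree because $A_X u = u A_X$. Finally, the coloring relations of $Y$ (which are those of $X$ restricted to $S$) are satisfied by $u|_S$ since they already hold for $u$.

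By the universal property of $C(\Qut_c(Y))$, there is a unital $*$-homomorphism $\phi : C(\Qut_c(Y)) \to C(\Qut_c(X))$ sending each generator $w_{ij}$ (for $i,j \in S$) to $u_{ij}$. If $i,j \in S$ lie in the same quantum orbit of $\Qut_c(X)$, then $u_{ij} \neq 0$, so $\phi(w_{ij}) \neq 0$, hence $w_{ij} \neq 0$, so $i$ and $j$ lie in the same quantum orbit of $\Qut_c(Y)$. This shows that each quantum orbit of $\Qut_c(X)$ contained in $S$ is contained in a single quantum orbit of $\Qut_c(Y)$, which is precisely the assertion that orbits of $\Qut_c(Y)$ are unions of orbits of $\Qut_c(X)$ (contained in $S$).

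The only subtle point, and the step I would double-check most carefully, is that all vanishing needed to restrict the magic unitary and the intertwining relation to $S$ follows from the orbit-union hypothesis on $S$; but this is exactly the content of $u_{ij} = 0$ when precisely one of $i,j$ lies in $S$, which is immediate from the definition of quantum orbits. Everything else is a straightforward application of the universal property. I would also note briefly that the argument is the direct quantum analogue of the classical statement that an induced subgraph on a union of orbits has automorphism group at least as coarse (on orbits) as the restriction of the original automorphism group.
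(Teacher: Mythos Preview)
Your proof is correct and follows essentially the same approach as the paper: restrict the fundamental representation $u$ to $S\times S$, verify that this submatrix satisfies the relations of $\Qut_c(Y)$, invoke universality to obtain a $*$-homomorphism $C(\Qut_c(Y))\to C(\Qut_c(X))$ sending generators to generators, and read off the orbit containment. The only cosmetic difference is that the paper packages the intertwining check as the matrix identity $DA_XDu = uDA_XD$ (with $D$ the diagonal indicator of $S$), whereas you verify $A_Y\,u|_S = u|_S\,A_Y$ entrywise; both arguments rest on the same vanishing $u_{ij}=0$ for $i\in S$, $j\notin S$.
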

\begin{proof}
    We will first prove the uncolored case and then remark on the additions needed when colors are present. Let $u$ and $v$ be the fundamental representations of $\Qut(X)$ and $\Qut(Y)$ respectively. Since $S$ is the union of orbits of $\Qut(X)$, the fundamental representation $u$ can be written as
    \[u = \begin{pmatrix}\hat{v} & 0 \\ 0 & \hat{u}\end{pmatrix}\]
    where $\hat{v}$ is a magic unitary indexed by the elements of $S$. We will show that $A_Y \hat{v} = \hat{v} A_Y$. Let $D$ be the diagonal matrix such that $D_{ii} = 1$ if $i \in S$ and $D_{ii} = 0$ otherwise. Then $Du = uD$, and therefore $DA_XDu = uDA_XD$. Of course,
    \[DA_XD = \begin{pmatrix}A_Y & 0 \\ 0 & 0\end{pmatrix},\]
    and therefore $DA_XDu = uDA_XD$ is equivalent to $A_Y \hat{v} = \hat{v} A_Y$. Thus we have proven our claim.

    Now by universality of $C(\Qut(Y))$, we have that there is a $*$-homomorphism $\varphi$ from $C(\Qut(Y))$ to $C(\Qut(X))$ such that $\varphi(v_{ij}) = \hat{v}_{ij}$. Thus if $v_{ij} = 0$, then $\hat{v}_{ij} = 0$, i.e., if $i,j \in V(Y)$ are in different orbits of $\Qut(Y)$, then they are in different orbits of $\Qut(X)$. This is the contrapositive of the lemma statement.

    If the graph $X$ is vertex colored, then we must additionally show that for any color $a$ appearing in $Y$, the $V(Y)$-indexed diagonal matrix $D^a$ that indicates whether a vertex of $Y$ is colored $a$ commutes with $\hat{v}$. However, if $\hat{D}^a$ is the similarly defined $V(X)$-indexed diagonal matrix, then $D\hat{D}^aD = D^a$ and the argument works the same as for $A_Y$.
\end{proof}

\section{Modifications that do not change the quantum automorphism group}

In this section, we show that the quantum automorphism group of a colored graph is invariant under certain modifications to the graph.
This will greatly simplify the proofs in the remainder of this paper, since most proofs can be reduced to a combination of the modifications from this section.
The main result in this section is the following lemma.

\begin{lemma}
	\label{lem:modifications}
	Let $X$ be a colored graph.
	\begin{enumerate}[label=\textup(\Roman*\textup)]
		\item\label{itm:mod:clique} Let $S \subseteq V(X)$ be an independent set which is a union of color classes.
		Then adding $\binom{|S|}{2}$ edges to $X$, one between every pair of distinct vertices in $S$, does not change the quantum automorphism group.
		
		\item\label{itm:mod:ST} Let $S,T \subseteq V(X)$ be disjoint vertex sets such that $S \cup T$ is an independent set and each of $S$ and $T$ is a union of color classes.
		Then adding $|S| \times |T|$ edges to $X$, one from every $s \in S$ to every $t \in T$, does not change the quantum automorphism group.
		
		\item\label{itm:mod:recolor} Let $S \subseteq V(X)$ be a monochromatic vertex set that is a union of quantum orbits of $X$.
		Then changing the color of $S$ to a new color \textup(that does not occur elsewhere\textup) does not change the quantum automorphism group.
		
		\item\label{itm:mod:isolated} Adding an isolated vertex in a new color \textup(that does not occur elsewhere\textup) does not change the quantum automorphism group.
	\end{enumerate}
\end{lemma}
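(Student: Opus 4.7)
The plan is to prove each of the four items by showing that the universal $C^*$-algebras $C(\Qut_c(X))$ and $C(\Qut_c(X'))$ coincide, where $X'$ denotes the modified graph. In each case both algebras have the same generating set $\{u_{ij}\}$ and one family of defining relations visibly implies the other; the real task is the reverse implication.

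For \textbf{(I)} and \textbf{(II)}, passing from $X$ to $X'$ only adds edges and thus strengthens the adjacency-preservation relations. The two key tools are: (a) color preservation, which gives $u_{ij} = 0$ whenever $c(i) \ne c(j)$, and (b) the magic-unitary identities $u_{ij} u_{kj} = 0$ for $i \ne k$ and $u_{ij} u_{il} = 0$ for $j \ne l$. Concretely, for (I), suppose a new edge $ik$ is created with $i, k \in S$, and fix $(j,l)$ with $jl \notin E(X')$; we must show $u_{ij} u_{kl} = 0$ in $C(\Qut_c(X))$. If $u_{ij} u_{kl} \ne 0$, color preservation forces $j,l \in S$ because $S$ is a union of color classes. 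The magic-unitary identity kills the case $j = l$ since $i \ne k$; and $j \ne l$ with $j,l \in S$ would put $jl$ in the new clique on $S$, a contradiction. The symmetric case, where the new edge is $jl$ and $ik \notin E(X')$, is handled identically. Part (II) is the same argument: $i \in S, k \in T$ forces $j \in S, l \in T$ by color preservation, and distinctness $j \ne l$ is automatic from $S \cap T = \emptyset$.

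For \textbf{(III)}, passing from $X$ to $X'$ only adds color relations of the form $u_{ij} = 0$ whenever $i \in S, j \notin S$ (or vice versa) with $c(i) = c(j)$. Since $S$ is by hypothesis a union of quantum orbits of $\Qut_c(X)$, these vanishings already hold by the very definition of orbits, so the surjection $C(\Qut_c(X)) \to C(\Qut_c(X'))$ is an isomorphism. For \textbf{(IV)}, the new vertex $v$ sits in its own color class, so $u_{vw} = u_{wv} = 0$ for all $w \ne v$; the row/column-sum condition then forces $u_{vv} = 1$, and since $v$ is isolated there are no further relations involving $v$. Hence $C(\Qut_c(X'))$ is generated by the remaining $u_{ij}$'s with exactly the relations of $C(\Qut_c(X))$.

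The main point to watch is the case analysis in (I) and (II): the adjacency condition is a two-sided XOR, so one must separately verify both ``new edge on the left, non-edge on the right'' and ``non-edge on the left, new edge on the right'', and one must not forget the diagonal subcases $i = k$ or $j = l$ in which the magic-unitary identities (rather than color preservation) do the work. Once these tools are lined up, each of the four modifications becomes a short and essentially mechanical check.
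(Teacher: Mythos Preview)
Your treatment of (III) and (IV) is correct and essentially identical to the paper's.  For (I) and (II), however, there is a genuine gap in the framing.  The claim that ``passing from $X$ to $X'$ only adds edges and thus strengthens the adjacency-preservation relations'' is false: the relation $u_{ij}u_{kl}=0$ is triggered by an \emph{XOR} of edge/non-edge, so enlarging the edge set both \emph{creates} new relations (when $ik$ becomes an edge while $jl$ remains a non-edge) and \emph{destroys} old ones (when $ik\in E(X)$, $jl\notin E(X)$, but $jl$ becomes an edge in $X'$).  Neither family of relations contains the other, so both implications must be argued.

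What you actually verify is that every relation of $X'$ already holds in $C(\Qut_c(X))$.  You still owe the converse.  Concretely, take $ik\in E(X)$ and $jl\notin E(X)$ with $jl\in E(X')$, so $j,l\in S$, $j\neq l$.  Since $S$ is independent in $X$ and $ik\in E(X)$, at least one of $i,k$ lies outside $S$; say $i\notin S$.  As $S$ is a union of color classes and $j\in S$, this gives $c(i)\neq c(j)$, hence $u_{ij}=0$ in $C(\Qut_c(X'))$.  So the missing direction falls to the same tools you already lined up---but it is not ``visible'', and leaving it out makes the proof incomplete.

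By contrast, the paper avoids the two-sided case analysis altogether: writing $u$ and $A_{X'}-A_X$ in block form with respect to $V=S\sqcup (V\setminus S)$, both are block-diagonal (by color preservation), and since the rows and columns of $u_S$ sum to $1$ one has $u_S(J_S-I_S)=(J_S-I_S)u_S$.  Thus $u$ commutes with $A_{X'}-A_X$ for \emph{any} color-preserving magic unitary, giving $uA_X=A_Xu \iff uA_{X'}=A_{X'}u$ in one stroke.  Part (II) is then reduced to (I) via the identity $X'\cup\binom{S}{2}\cup\binom{T}{2}=X\cup\binom{S\cup T}{2}$, rather than repeated element-wise.
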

\begin{proof}
	\begin{my-enumerate}
		\item Let $Y = X \cup \binom{S}{2}$ be the colored graph obtained from $X$ by adding all possible edges between vertices in $S$.
		Let $u = (u_{ij})_{i,j \in V(X)}$ be a magic unitary in an arbitrary $C^*$-algebra that respects the color classes of $X$ (i.e. $u_{ij} = 0$ whenever $c(i) \neq c(j)$).
		We prove that $u$ satisfies the relations of $\Qut_c(X)$ if and only if it satisfies the relations of $\Qut_c(Y)$.
		With respect to the ordered partition $V(X) = V(Y) = S \sqcup R$ with $R = V(X) \setminus S$, we see that $u$ and $A_Y - A_X$ are in block diagonal form
		\[ u = \begin{pmatrix}
			u_S & 0 \\
			0 & u_R
		\end{pmatrix},
		\qquad
		A_Y - A_X = \begin{pmatrix}
			J_S - I_S & 0 \\
			0 & 0
		\end{pmatrix}, \]
		where $u_S$ and $u_R$ denote the restrictions of $u$ to $S \times S$ and $R \times R$, and where $J_S$ and $I_S$ denote the all-ones matrix and the identity matrix on $S \times S$, respectively.
		Since rows and columns in $u_S$ sum to $1$, we have $u_SJ_S = J_S = J_Su_S$, and therefore
		\[ \begin{pmatrix}
			u_S & 0 \\
			0 & u_R
		\end{pmatrix}\begin{pmatrix}
			J_S - I_S & 0 \\
			0 & 0
		\end{pmatrix} = \begin{pmatrix}
			J_S - u_S & 0 \\
			0 & 0
		\end{pmatrix} = \begin{pmatrix}
			J_S - I_S & 0 \\
			0 & 0
		\end{pmatrix}\begin{pmatrix}
			u_S & 0 \\
			0 & u_R
		\end{pmatrix}. \]
		This shows that $u$ commutes with $A_{Y} - A_X$.
		Therefore $u$ commutes with $A_X$ if and only it commutes with $A_{Y}$.
		The color classes remain the same, so we conclude that $u$ satisfies the relations of $\Qut_c(X)$ if and only if it satisfies the relations of $\Qut_c(Y)$.
		By universality of the respective $C^*$\nobreakdash-algebras, we get natural $*$\nobreakdash-homomorphisms $\phi : C(\Qut_c(X)) \to C(\Qut_c(Y))$ and $\psi : C(\Qut_c(Y)) \to C(\Qut_c(X))$ that map the fundamental representation of one to the fundamental representation of the other.
		Clearly $\phi$ and $\psi$ are each other's inverses and preserve the comultiplication, so we have $\Qut_c(X) \cong \Qut_c(Y)$. 
		
		\item Let $Y'$ be the colored graph obtained from $X$ by adding all possible edges between $S$ and $T$.
		%Borrowing notation from \ref{itm:mod:clique}, note that
		Since $S$ and $T$ are independent sets in $Y'$, it follows from \ref{itm:mod:clique} that $\Qut_c(Y') \cong \Qut_c(Y' \cup \binom{S}{2} \cup \binom{T}{2})$.
		But we have $Y' \cup \binom{S}{2} \cup \binom{T}{2} = X \cup \binom{S \cup T}{2}$, so another application of \ref{itm:mod:clique} shows that
		\[ \Qut_c(Y') \cong \Qut_c(Y' \cup \textstyle\binom{S}{2} \cup \textstyle\binom{T}{2}) = \Qut_c(X \cup \textstyle\binom{S \cup T}{2}) \cong \Qut_c(X). \]
		
		\item Let $Y''$ be the colored graph obtained from $X$ by recoloring the vertices of $S$ to a new color (that does not occur elsewhere).
		Let $R \subseteq V(X) \setminus S$ be the remainder of the color class containing $S$, and let $u = (u_{ij})_{i,j \in V(X)}$ and $v'' = (v_{ij}'')_{i,j \in V(Y'')}$ denote the fundamental representations of $\Qut_c(X)$ and $\Qut_c(Y'')$, respectively.
		The only difference between $X$ and $Y''$ is that the latter splits the color class $S \cup R$ of $X$ into separate color classes $S$ and $R$, so the set of relations defining $C(\Qut_c(Y''))$ is the union of the set of relations defining $C(\Qut_c(X))$ and the relations $\{v_{sr}'' = v_{rs}'' = 0 \, : \, s \in S, r \in R\}$.
		Since $S$ is a union of quantum orbits of $X$, the latter (added) relations are also satisfied by $u$, so $u$ satisfies all the relations of $C(\Qut_c(Y''))$.
		Conversely, since the relations defining $C(\Qut_c(Y''))$ contain the relations defining $C(\Qut_c(X))$, clearly $v''$ satisfies all the relations of $C(\Qut_c(X))$.
		By universality of the respective $C^*$\nobreakdash-algebras, we get natural $*$\nobreakdash-homomorphisms $\phi : C(\Qut_c(X)) \to C(\Qut_c(Y''))$ and $\psi : C(\Qut_c(Y'')) \to C(\Qut_c(X))$ that map the fundamental representation of one to the fundamental representation of the other.
		Clearly $\phi$ and $\psi$ are each other's inverses and preserve the comultiplication, so we have $\Qut_c(X) \cong \Qut_c(Y'')$.
		
		\item Let $Y'''$ be the colored graph obtained from the construction in \ref{itm:mod:isolated}, let $v_0 \in V(Y''') \setminus V(X)$ denote the isolated vertex added to $X$, and let $u = (u_{ij})_{i,j \in V(X)}$ and $v''' = (v_{ij}''')_{i,j \in V(Y''')}$ denote the fundamental representations of $\Qut_c(X)$ and $\Qut_c(Y''')$, respectively.
		Then clearly the magic unitary $u' = (u_{ij}')_{i,j \in V(Y''')}$ defined by
		\[ u_{ij}' = \begin{cases}
			1,&\quad \text{if $i = j = v_0$}; \\
			u_{ij},&\quad\text{if $i,j \in V(X)$}; \\
			0,&\quad\text{otherwise};
		\end{cases} \]
		satisfies the relations of $\Qut_c(Y''')$, and the restriction $v$ of $v'''$ to $V(X) \times V(X)$ satisfies the relations of $\Qut_c(X)$.
		By universality of the respective $C^*$\nobreakdash-algebras, we get natural $*$\nobreakdash-homomorphisms $\phi : C(\Qut_c(X)) \to C(\Qut_c(Y'''))$ and $\psi : C(\Qut_c(Y''')) \to C(\Qut_c(X))$ that map $u$ to $v$ and $v'''$ to $u'$, respectively.
		Clearly $\phi$ and $\psi$ are each other's inverses, so we have $C(\Qut_c(X)) \cong C(\Qut_c(Y'''))$ as $C^*$\nobreakdash-algebras.
		To see that this isomorphism also preserves the comultiplication, note that for all $i,j \in V(X)$ we have
		\[ ((\phi \tensor \phi) \circ \Delta_X)(u_{ij}) = \sum_{k \in V(X)} v_{ik}''' \tensor v_{kj}''' = \sum_{k \in V(Y''')} v_{ik}''' \tensor v_{kj}''' = (\Delta_{Y'''} \circ \phi)(u_{ij}), \]
		where the middle equality uses that $v_{iv_0}''' = v_{v_0j}''' = 0$ for all $i,j \in V(X)$.
		Since the set $\{u_{ij} \, : \, i,j, \in V(X)\}$ generates $C(\Qut_c(X))$, it follows that $(\phi \tensor \phi) \circ \Delta_X = \Delta_{Y'''} \circ \phi$.
		\qedhere
	\end{my-enumerate}
\end{proof}

\begin{remark}
	In \myautoref{lem:modifications}{itm:mod:clique} and \ref{itm:mod:ST}, the assumption that $S$ or $S \cup T$ is an independent set can be omitted if one is willing to work with multigraphs.
	To do so, one has to define the quantum automorphism group of a multigraph in the same way as \autoref{def:Qut}, namely as the magic unitaries that preserve color classes and commute with the adjacency matrix (which can now contain entries other than $0$ and $1$).
	Note however that this might be considered to be the ``wrong'' definition of the quantum automorphism group of a multigraph, since it does not see (quantum) automorphisms that leave the vertices invariant but permute parallel edges.
	
	For simple graphs, \myautoref{lem:modifications}{itm:mod:ST} remains true if the condition ``$S \cup T$ is an independent set'' is weakened to ``there are no edges between $S$ and $T$'', by the same proof as above, but this proof now uses multigraphs in an intermediate step.
	We have no need for this more general version, so we have stated the result with the slightly stronger assumption (that $S \cup T$ is an independent set) to make sure all graphs in the intermediate steps remain simple.
\end{remark}

\section{Reduction to rooted trees}

Now we turn our attention to trees.
In this section, we show how to transform a tree into a rooted tree with the same quantum automorphism group.

Recall the following definitions.
\begin{definition}
	Let $X$ be a graph.
	The \emph{eccentricity} of a vertex $v \in V(X)$, denoted $\epsilon(v)$, is the maximum distance from $v$ to any other vertex in $X$.
	The \emph{radius} of $X$, denoted $r(X)$, is the minimum eccentricity of any vertex in $X$.
\end{definition}

\begin{definition}
	The \emph{\textup(Jordan\textup) center} of $X$, which we will denote by $Z(X)$, is the set of vertices of minimum eccentricity.
\end{definition}

It is well-known that the Jordan center of a tree coincides with the vertex set obtained by simultaneously removing all leaves from the graph and repeating this procedure until only a single vertex or edge remains.%
\footnote{In fact, Jordan's original definition of the center (see \cite[\S 4]{Jordan-center}) was based on this iterative procedure and not on the modern definition in terms of distances.
To prove that these notions are equivalent, note that the Jordan center of a tree $T$ with $|V(T)| \geq 3$ is equal to the Jordan center of the tree $T'$ obtained from $T$ by removing all leaves.}
It follows that the Jordan center of a tree consists of either a single vertex or two adjacent vertices.
In the latter case, we refer to the edge between the two vertices in the center as the \emph{central edge}.

In the 19th century, Jordan \cite{Jordan-center} already recognized that the center of a tree must be preserved by every automorphism.
We extend this to quantum automorphisms, and use it as the basis for our reduction from trees to rooted trees.
For this we use the following construction.

\begin{definition}
	Let $T$ be a tree.
	The \emph{rootification} of $T$ is the rooted tree $(T_r,r)$ obtained from $T$ in the following way:
	\begin{itemize}
		\item if $|Z(T)| = 1$, set $T_r := T$ and let $r$ be the central vertex of $T$;
		\item if $|Z(T)| = 2$, let $T_r$ be the tree obtained from $T$ by subdividing the central edge of $T$, and let $r \in V(T_r)$ be the new vertex thus created.
	\end{itemize}
\end{definition}

Below, we will show that $\Qut(T) \cong \Qut_r(T_r)$ for any tree $T$.
The key to this result is the following lemma.

\begin{lemma}
	\label{lem:leaf-removal-orbits}
	Let $X$ be a graph, and let $S \subseteq V(X)$ be a vertex set obtained by simultaneously deleting all leaves and repeating this process any number of times.
	Then $S$ is a union of quantum orbits.
\end{lemma}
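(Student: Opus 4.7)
The plan is to proceed by induction on the number of iterations of leaf-removal. Let $V_0 := V(X)$ and, for $k \geq 0$, let $L_k \subseteq V_k$ denote the set of leaves of the induced subgraph $X[V_k]$, and set $V_{k+1} := V_k \setminus L_k$. I will show, by induction on $k$, that each $V_k$ is a union of quantum orbits of $\Qut(X)$. The base case $k = 0$ is trivial, since $V_0 = V(X)$ is tautologically a union of orbits.

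For the inductive step, suppose $V_k$ is a union of orbits of $\Qut(X)$. It suffices to show that $L_k$ is a union of orbits of $\Qut(X)$, since then $V_{k+1} = V_k \setminus L_k$ will also be. Here is where the two key tools from Section~\ref{sec:orbsncolors} come in. First, by \autoref{lem:inducedorbits}, every orbit of $\Qut(X[V_k])$ is a union of orbits of $\Qut(X)$ (intersected with $V_k$). So it is enough to prove that $L_k$ is a union of orbits of $\Qut(X[V_k])$. Second, by \autoref{rem:degsorbs}, vertices of different degrees in $X[V_k]$ lie in different orbits of $\Qut(X[V_k])$. Since $L_k$ is precisely the set of vertices of degree $1$ in $X[V_k]$, it is automatically a union of orbits of $\Qut(X[V_k])$, completing the inductive step.

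Chaining these observations gives the lemma: any $S$ obtained by iterating the leaf-removal process finitely many times equals $V_k$ for some $k$, and is therefore a union of quantum orbits of $\Qut(X)$.

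The only subtlety I anticipate is making sure the passage from $\Qut(X)$ to $\Qut(X[V_k])$ is invoked correctly: \autoref{lem:inducedorbits} requires the set $V_k$ to be a union of $\Qut(X)$-orbits, which is exactly what the inductive hypothesis supplies, so the application is clean. Everything else is bookkeeping.
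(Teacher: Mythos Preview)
Your argument is correct and follows essentially the same route as the paper: both proofs induct on the number of leaf-removal steps, invoking \autoref{rem:degsorbs} to see that (non-)leaves form a union of orbits in the current induced subgraph and \autoref{lem:inducedorbits} to transfer this back to orbits of $\Qut(X)$. The only cosmetic difference is that you show $L_k$ is a union of orbits and then pass to its complement $V_{k+1}$, whereas the paper argues directly that the set of non-leaves is a union of orbits.
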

\begin{proof}
Let $X_1$ be the graph obtained from $X$ by deleting all of its leaves. In other words, $X_1$ is the subgraph of $X$ induced by the set $U = \{v \in V(X) :  d(v) \ne 1\}$. By \autoref{rem:degsorbs}, $U$ is a union of quantum orbits of $X$ and therefore by \autoref{lem:inducedorbits} the quantum orbits of $X_1$ are unions of the quantum orbits of $X$.

Now, if $X_2$ is the graph obtained from $X_1$ by removing all of its leaves, then by the same reasoning as above $V(X_2)$ is a union of quantum orbits of $X_1$, and therefore the union of quantum orbits of $X$. Iterating this argument proves the lemma.
%	Let $V(X) = A_0 \supseteq A_1 \supseteq A_2 \supseteq \cdots \supseteq A_k = S$ denote the vertex sets obtained by repeatedly removing all leaves; that is, $A_{i+1} := \{v \in A_i \, : \, d_{A_i}(v) \neq 1\}$.
%	We prove by induction on $i$ that $A_i$ is a union of quantum orbits for all $i$.
%	For $i = 0$ this is clear.
%	Now suppose that $A_i$ is a union of quantum orbits.
%	Let $B_{i+1} := \{ v \in A_i \, : \, d_{A_i}(v) = 1\}$, so that $A_i = A_{i+1} \sqcup B_{i+1}$.
%	Then it follows from \autoref{lem:local-degree} that $u_{vw} = 0$ whenever $v \in A_{i+1}$ and $w \in B_{i+1}$, so it follows that each of $A_{i+1}$ and $B_{i+1}$ is also a union of quantum orbits.
	
%	By induction, it follows that $A_i$ is a union of quantum orbits for all $i$.
%	In particular, this is true for $S = A_k$.
\end{proof}

\begin{corollary}
	\label{cor:center-orbits}
	Let $T$ be a tree.
	Then $Z(T)$ is a union of quantum orbits.
\end{corollary}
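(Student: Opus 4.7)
The plan is a one-line deduction from \autoref{lem:leaf-removal-orbits}. The key ingredient is the classical fact, recalled in the paragraph just before the definition of rootification, that the Jordan center $Z(T)$ of any tree $T$ is exactly the vertex set obtained by simultaneously stripping off all leaves and iterating this procedure until a single vertex or a single edge remains. (Jordan's original definition of the center was in fact this iterative one; the footnote in the text sketches the equivalence with the distance-based definition.)

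With that identification, I would simply set $S := Z(T)$ in \autoref{lem:leaf-removal-orbits}. The hypothesis of the lemma — that $S$ arises from iterated simultaneous leaf removal — is satisfied by construction, so the conclusion is immediate: $Z(T)$ is a union of quantum orbits of $T$.

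There is no real obstacle here; the entire content of the corollary is already packaged in \autoref{lem:leaf-removal-orbits} together with Jordan's characterisation of the center. The only thing worth saying explicitly in the write-up is the appeal to Jordan's characterisation, so that the reader sees why the hypothesis of \autoref{lem:leaf-removal-orbits} applies to $S = Z(T)$.
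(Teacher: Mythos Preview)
Your proposal is correct and matches the paper's approach exactly: the corollary is stated without proof precisely because it is an immediate consequence of \autoref{lem:leaf-removal-orbits} together with Jordan's characterisation of $Z(T)$ as the vertex set obtained by iterated leaf removal. There is nothing to add.
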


\begin{corollary}
	\label{cor:root-orbit}
	Let $T$ be a tree, and let $(T_r,r)$ be its rootification.
	Then $\{r\}$ is a quantum orbit of $T_r$.
\end{corollary}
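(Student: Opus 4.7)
The plan is to apply \autoref{cor:center-orbits} directly to $T_r$, after verifying that $Z(T_r) = \{r\}$. Granted this, the corollary yields that $\{r\} = Z(T_r)$ is a union of quantum orbits of $T_r$; since a union of orbits consisting of a single vertex must itself be a single orbit, $\{r\}$ is a quantum orbit. So the whole task reduces to computing the Jordan center of $T_r$.

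When $|Z(T)| = 1$, no calculation is needed: by definition $T_r = T$ and $r$ is the central vertex of $T$, so $Z(T_r) = Z(T) = \{r\}$.

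For the case $|Z(T)| = 2$, I would write $Z(T) = \{u,v\}$, set $d := r(T) = \epsilon_T(u) = \epsilon_T(v)$, and denote by $T_u$ (resp.\ $T_v$) the component of $T - \{uv\}$ containing $u$ (resp.\ $v$). A short argument shows that $\epsilon_{T_u}(u) = \epsilon_{T_v}(v) = d-1$: a vertex $w \in T_u$ at distance $d$ from $u$ in $T_u$ would be at distance $d+1$ from $v$ in $T$, contradicting $\epsilon_T(v) = d$, and the same observation forces the eccentricity of $u$ in $T$ to be realized by a vertex of $T_v \cup \{v\}$ that is then at distance exactly $d-1$ from $v$. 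Translating these distances into $T_r$, where $r$ has been inserted on the edge $uv$, gives $\epsilon_{T_r}(r) = 1 + (d-1) = d$, whereas $\epsilon_{T_r}(u) = \epsilon_{T_r}(v) = \max(d-1, 2+(d-1)) = d+1$, and $\epsilon_{T_r}(w) \ge d+2$ for every other vertex $w$ (since any such $w$ lies in one of $T_u$, $T_v$ and the distance from $w$ to the opposite side must pass through $r$ and an edge incident to $w$). Hence $r$ is the unique vertex of minimum eccentricity, so $Z(T_r) = \{r\}$ as required, and \autoref{cor:center-orbits} finishes the job.

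The principal obstacle is just the $|Z(T)| = 2$ case: one has to keep track of which distances are measured in $T$, in $T_u$ or $T_v$, and in $T_r$. But once the equalities $\epsilon_{T_u}(u) = \epsilon_{T_v}(v) = d-1$ are in place, the remaining eccentricity comparisons in $T_r$ are mechanical.
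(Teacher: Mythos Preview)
Your proposal is correct and follows exactly the paper's approach: the paper's proof is the single line ``Since $Z(T_r) = \{r\}$, this follows from \autoref{cor:center-orbits}.'' Your eccentricity computation for the case $|Z(T)| = 2$ is a sound (and more detailed) justification of the claim $Z(T_r) = \{r\}$, which the paper simply asserts.
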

\begin{proof}
    Since $Z(T_r) = \{r\}$, this follows from \autoref{cor:center-orbits}.
\end{proof}

\begin{corollary}
	\label{cor:rooted-unrooted}
	Let $T$ be a tree, and let $(T_r,r)$ be its rootification.
	Then $\Qut(T_r) \cong \Qut_r(T_r)$; that is, the quantum automorphism group of $T_r$ as a graph is isomorphic to the quantum automorphism group of $T_r$ as a rooted tree.
\end{corollary}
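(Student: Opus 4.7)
The plan is to invoke \myautoref{lem:modifications}{itm:mod:recolor} to show that assigning the root a distinct color has no effect on the quantum automorphism group. The only distinction between $\Qut(T_r)$ and $\Qut_r(T_r)$ lies in the vertex coloring: as a graph, $T_r$ is regarded as monochromatic (by the convention in \autoref{def:colored-graph}), whereas as a rooted tree it carries the two-coloring in which $r$ receives the root color $c_1$ and all remaining vertices receive the non-root color $c_0$.

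First I would observe that $S := \{r\}$ is a monochromatic subset of $V(T_r)$ (trivially, being a singleton in a monochromatic graph), and that it is a union of quantum orbits of $\Qut(T_r)$ by \autoref{cor:root-orbit}. Therefore the hypotheses of \myautoref{lem:modifications}{itm:mod:recolor} are satisfied with this choice of $S$, and recoloring $r$ with a fresh color not appearing elsewhere yields a colored graph with the same quantum automorphism group as $T_r$.

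Finally, I would verify that the resulting coloring is exactly the rooted-tree coloring of $(T_r,r)$: the vertex $r$ has a unique new color, playing the role of the root color $c_1$, while every other vertex retains the original color, playing the role of $c_0$. This is precisely the colored graph underlying $\Qut_r(T_r)$, so the desired isomorphism $\Qut(T_r) \cong \Qut_r(T_r)$ follows. I do not foresee any real obstacle; all the substantive work has already been carried out in \autoref{lem:leaf-removal-orbits} and \myautoref{lem:modifications}{itm:mod:recolor}, and the proof reduces to the bookkeeping observation that the two color classes produced by the recoloring match the rooted/non-rooted partition by construction.
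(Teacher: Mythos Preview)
Your proposal is correct and matches the paper's own proof essentially verbatim: the paper simply cites \autoref{cor:root-orbit} together with \myautoref{lem:modifications}{itm:mod:recolor}, which is exactly the argument you spell out. Your additional bookkeeping about the resulting coloring agreeing with the rooted-tree coloring is accurate and just makes explicit what the paper leaves implicit.
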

\begin{proof}
	This follows from \autoref{cor:root-orbit} and \myautoref{lem:modifications}{itm:mod:recolor}.
\end{proof}

We now come to the main result of this section:

\begin{proposition}
	\label{prop:rootification-isomorphism}
	Let $T$ be a tree, and let $(T_r,r)$ be the rootification of $T$.
	Then $\Qut(T) \cong \Qut_r(T_r)$.
\end{proposition}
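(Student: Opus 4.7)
The plan is to split into two cases based on $|Z(T)|$. If $|Z(T)| = 1$, then by definition $T_r = T$ as graphs and $r$ is the unique central vertex, so the conclusion is immediate from \autoref{cor:rooted-unrooted}. So assume $|Z(T)| = 2$, write $Z(T) = \{a, b\}$, and note that $T_r$ differs from $T$ only in that the edge $ab$ is replaced by the length-two path $a\,r\,b$ through the new root vertex $r$.

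I would prove $\Qut(T) \cong \Qut_r(T_r)$ by exhibiting mutually inverse $*$-homomorphisms between their $C^*$-algebras via the universal property. Three orbit facts will drive the argument: (a) $\{a, b\}$ is a union of quantum orbits of $\Qut(T)$ by \autoref{cor:center-orbits}; (b) $\{r\}$ is a quantum orbit of $\Qut_r(T_r)$, since $r$ is alone in its color class; and (c) $\{a, b\}$ is a union of quantum orbits of $\Qut_r(T_r)$, because $a$ and $b$ are the only neighbors of $r$ and therefore receive a label distinct from all other vertices after one iteration of color refinement (\autoref{lem:colorrefineorbits}).

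Let $u$ and $v$ denote the fundamental representations of $\Qut(T)$ and $\Qut_r(T_r)$, respectively. For $\psi : C(\Qut_r(T_r)) \to C(\Qut(T))$, I would extend $u$ to a $V(T_r) \times V(T_r)$ matrix $u'$ by $u'_{rr} := 1$, $u'_{ir} = u'_{ri} := 0$ for $i \neq r$, and $u'_{ij} := u_{ij}$ for $i, j \in V(T)$, and verify that $u'$ satisfies the defining relations of $\Qut_r(T_r)$. For $\phi : C(\Qut(T)) \to C(\Qut_r(T_r))$, I would send $u_{ij} \mapsto v_{ij}$ for $i, j \in V(T)$; by (b) the row and column of $v$ at $r$ vanish, so the principal submatrix of $v$ on $V(T)$ remains a magic unitary.

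The hard part will be the adjacency-commutation check, since $A_T$ and the $V(T) \times V(T)$ block of $A_{T_r}$ differ by the rank-two matrix supported on the central edge $ab$. Unpacking this difference — and using (a), (c) to kill the off-block entries — reduces the whole commutation to the two identities $u_{aa} = u_{bb}$ and $u_{ab} = u_{ba}$ (and likewise for $v$). Both identities follow from the magic-unitary row and column sums restricted to $\{a, b\}$: by (a) (respectively (c)), the rows and columns indexed by $a$ or $b$ have support only inside the $2 \times 2$ block $\{a, b\} \times \{a, b\}$, so the four sums $u_{aa} + u_{ab}$, $u_{ba} + u_{bb}$, $u_{aa} + u_{ba}$, $u_{ab} + u_{bb}$ all equal $1$, forcing both equalities. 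The mutual inverse property and compatibility with the comultiplications will follow directly from the action on generators, using (b) to see $v_{rr} = 1$.
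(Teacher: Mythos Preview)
Your proposal is correct, but it takes a genuinely different route from the paper's proof. The paper handles the case $|Z(T)|=2$ by chaining together five applications of the ``modifications'' lemma (\autoref{lem:modifications}): starting from $T$, it recolors $Z(T)=\{z_1,z_2\}$ to a new color, deletes the central edge (via \ref{itm:mod:clique}), adds an isolated vertex $r$ in a fresh color (via \ref{itm:mod:isolated}), joins $r$ to $z_1,z_2$ (via \ref{itm:mod:ST}), and finally undoes the recoloring of $\{z_1,z_2\}$ in $T_r$ (via \ref{itm:mod:recolor}, using that $\{z_1,z_2\}$ is a union of quantum orbits of $T_r$ by \autoref{lem:leaf-removal-orbits}). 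Each step is an instance of a general lemma, so no explicit $*$-homomorphism is ever written down.

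Your approach instead builds the isomorphism directly from the universal properties, extending $u$ by a trivial $r$-row/column and restricting $v$ to $V(T)$. The key reduction --- that the difference $A_T - (A_{T_r}\text{ restricted to }V(T))$ is the rank-two matrix $E_{ab}$, and that $E_{ab}$ commutes with any magic unitary whose $\{a,b\}$-block is a $2\times 2$ magic unitary --- is correct and is exactly what forces the identities $u_{aa}=u_{bb}$, $u_{ab}=u_{ba}$ you isolate. One thing you leave implicit: for $u'A_{T_r}=A_{T_r}u'$ you must also match the off-diagonal $(V(T),r)$ and $(r,V(T))$ blocks, which amounts to $u(e_a+e_b)=e_a+e_b$ and its transpose; this follows from the same orbit fact (a), so it is not a gap, just something to spell out. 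The trade-off is clear: the paper's argument is modular and reuses machinery already set up, while yours is self-contained and avoids \autoref{lem:modifications} entirely at the cost of a little explicit matrix bookkeeping.
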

\begin{proof}
	If $|Z(T)| = 1$, then we have $T_r = T$, so the result follows from \autoref{cor:rooted-unrooted}.
	Otherwise, $|Z(T)| = 2$, and we let $Z(T) = \{z_1,z_2\}$.
	Let $X_0 := T$ and $X_5 := T_r$.
	We transform $X_0$ into $X_5$ through the following sequence of modifications from \autoref{lem:modifications}, which is also depicted in \autoref{fig:rootification}.
	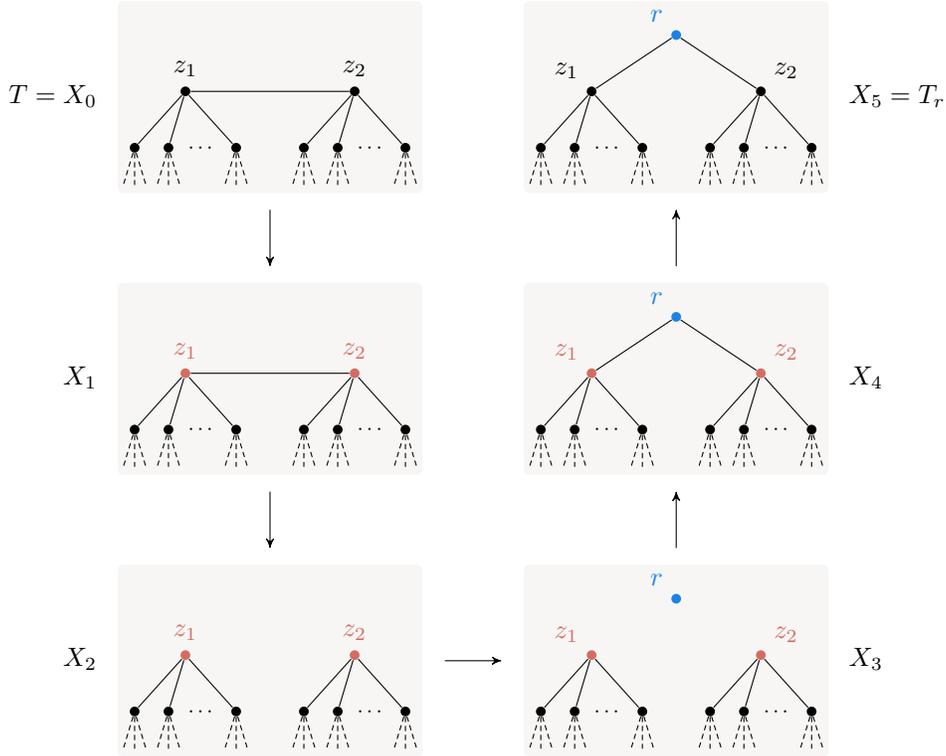
\begin{figure}[h!t]
		\centering
		\begin{tikzpicture}[scale=1.5]
			\begin{scope}
				% X0
				\fill[mybg,rounded corners=2pt] (-.6,-.9) rectangle ++(2.7,1.7);
				\node[anchor=base east] at (-.7,-.1) {$T = X_0$};
				\minitree{(0,0)}{l0}{c0}{above:$z_1$}
				\minitree{(1.5,0)}{r0}{c0}{above:$z_2$}
				\draw (Rl0) -- (Rr0);
				% downwards arrow from X0 to X1
				\draw[->] (.75,-1.05) -- ++(0,-.5);
				% X1
				\fill[mybg,rounded corners=2pt] (-.6,-3.4) rectangle ++(2.7,1.7);
				\node[anchor=base east] at (-.7,-2.6) {$X_1$};
				\minitree{(0,-2.5)}{l1}{c1}{above:$z_1$}
				\minitree{(1.5,-2.5)}{r1}{c1}{above:$z_2$}
				\draw (Rl1) -- (Rr1);
				% downwards arrow from X1 to X2
				\draw[->] (.75,-3.55) -- ++(0,-.5);
				% X2
				\fill[mybg,rounded corners=2pt] (-.6,-5.9) rectangle ++(2.7,1.7);
				\node[anchor=base east] at (-.7,-5.1) {$X_2$};
				\minitree{(0,-5)}{l2}{c1}{above:$z_1$}
				\minitree{(1.5,-5)}{r2}{c1}{above:$z_2$}
			\end{scope}
			% right arrow from X2 to X3
			\draw[->] (2.3,-5.05) -- ++(.5,0);
			\begin{scope}[xshift=3.6cm]
				% X5
				\fill[mybg,rounded corners=2pt] (-.6,-.9) rectangle ++(2.7,1.7);
				\node[anchor=base west] at (2.2,-.1) {$X_5 = T_r$};
				\minitree{(0,0)}{l5}{c0}{above left:$z_1$}
				\minitree{(1.5,0)}{r5}{c0}{above right:$z_2$}
				\draw[c2] (.75,.5) node[vertex,label=above left:$r$] (Ru5) {};
				\draw (Rr5) -- (Ru5) -- (Rl5);
				% upwards arrow from X4 to X5
				\draw[<-] (.75,-1.05) -- ++(0,-.5);
				% X4
				\fill[mybg,rounded corners=2pt] (-.6,-3.4) rectangle ++(2.7,1.7);
				\node[anchor=base west] at (2.2,-2.6) {$X_4$};
				\minitree{(0,-2.5)}{l4}{c1}{above left:$z_1$}
				\minitree{(1.5,-2.5)}{r4}{c1}{above right:$z_2$}
				\draw[c2] (.75,-2) node[vertex,label=above left:$r$] (Ru4) {};
				\draw (Rr4) -- (Ru4) -- (Rl4);
				% upwards arrow from X3 to X4
				\draw[<-] (.75,-3.55) -- ++(0,-.5);
				% X3
				\fill[mybg,rounded corners=2pt] (-.6,-5.9) rectangle ++(2.7,1.7);
				\node[anchor=base west] at (2.2,-5.1) {$X_3$};
				\minitree{(0,-5)}{l3}{c1}{above left:$z_1$}
				\minitree{(1.5,-5)}{r3}{c1}{above right:$z_2$}
				\draw[c2] (.75,-4.5) node[vertex,label=above left:$r$] (Ru3) {};
			\end{scope}
		\end{tikzpicture}
		\caption{Transforming any tree $T$ with Jordan center of size $2$ to its rootification $T_r$.}
		\label{fig:rootification}
	\end{figure}
	\begin{itemize}
		\item Let $X_1$ be the colored graph obtained from the (uncolored) graph $X_0 := T$ by coloring $V(T) \setminus Z(T)$ with the color $c_0$ and coloring $Z(T)$ in a different color $c_1 \neq c_0$.
		Then, by \autoref{cor:center-orbits} and \myautoref{lem:modifications}{itm:mod:recolor}, we have $\Qut(X_0) \cong \Qut_c(X_1)$.
		
		\item Let $X_2$ be the colored graph obtained from $X_1$ by removing the edge between $z_1$ and $z_2$.
		Then, by \myautoref{lem:modifications}{itm:mod:clique} (applied to $X_2$), we have $\Qut_c(X_1) \cong \Qut_c(X_2)$.
		
		\item Let $X_3$ be the colored graph obtained from $X_2$ by adding an isolated vertex $r$ in a new color $c_2 \neq c_0,c_1$.
		Then, by \myautoref{lem:modifications}{itm:mod:isolated}, we have $\Qut_c(X_2) \cong \Qut_c(X_3)$.
		
		\item Let $X_4$ be the colored graph obtained from $X_3$ by connecting all vertices of color $c_1$ (namely, $z_1$ and $z_2$) to all vertices of color $c_2$ (namely, $r$).
		Then, by \myautoref{lem:modifications}{itm:mod:ST}, we have $\Qut_c(X_3) \cong \Qut_c(X_4)$.
		
		\item Finally, let $X_5 := T_r$ be the rootification of $T$.
		Since $Z(T)$ can be obtained from $T$ by repeatedly removing all leaves, doing the same in $T_r$ shows that $\{z_1,z_2,r\}$ can be obtained from $T_r$ by repeatedly removing all leaves.
		Hence it follows from \autoref{lem:leaf-removal-orbits} that $\{z_1,z_2,r\}$ is a union of quantum orbits of $T_r$.
		Since $\{r\}$ is a quantum orbit by itself, it follows that $\{z_1,z_2\}$ is a union of quantum orbits of $T_r$.
		Since $X_4$ is (isomorphic to) the graph obtained from $X_5 = T_r$ by recoloring $\{z_1,z_2\}$ in a new color, it follows from \myautoref{lem:modifications}{itm:mod:recolor} (applied to $X_5$) that $\Qut_c(X_4) \cong \Qut_c(X_5)$.
	\end{itemize}
	%All in all, we have that $\Qut(T) = \Qut(X_0) \cong \Qut_c(X_1) \cong \Qut_c(X_2) \cong \Qut_c(X_3) \cong \Qut_c(X_4) \cong \Qut_c(X_5) = \Qut_r(T_r)$.
	All in all, we have $\Qut(T) = \Qut(X_0) \cong \Qut_c(X_1) \cong \cdots \cong \Qut_c(X_5) = \Qut_r(T_r)$.
	%All in all, we have that $\Qut(T) \cong \Qut_r(T_r)$.
\end{proof}

\section{Quantum Automorphisms of Trees}

\begin{lemma}
	\label{lem:tree-to-forest}
	Let $F$ be a forest of rooted trees, and let $\widetilde{F}$ be the rooted tree obtained by connecting the roots of the individual trees of $F$ to a single new root.
	Then, $\Qut_c(F) \cong \Qut_r(\widetilde{F})$.
\end{lemma}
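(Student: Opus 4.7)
The plan is to transform $F$ into $\widetilde{F}$ through a sequence of modifications from \autoref{lem:modifications}, in the spirit of the proof of \autoref{prop:rootification-isomorphism}. Recall the coloring conventions: in $F$, roots carry color $c_1$ and non-roots carry color $c_0$, while in $\widetilde{F}$ only the new root $r$ is colored $c_1$ and every other vertex (including the original roots) is colored $c_0$.

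First I would apply \myautoref{lem:modifications}{itm:mod:isolated} to attach an isolated vertex $r$ to $F$ in a brand-new color $c_2$; call the resulting colored graph $X_1$. Next, apply \myautoref{lem:modifications}{itm:mod:ST} with $S = \{r\}$ and $T$ the set of original roots: these are full color classes, and $S \cup T$ is independent in $X_1$ because $r$ is isolated and distinct roots of $F$ lie in different connected components. This yields $X_2$, which is $\widetilde{F}$ equipped with three colors ($r$ in $c_2$, original roots in $c_1$, non-roots in $c_0$), and gives $\Qut_c(F) \cong \Qut_c(X_1) \cong \Qut_c(X_2)$.

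To finish, I would compare $X_2$ with $\widetilde{F}$ carrying its rooted-tree coloring. Working in the latter, \myautoref{lem:modifications}{itm:mod:recolor} lets me split $N(r)$ off from the color class $c_0$ into a new color, provided $N(r)$ is a union of quantum orbits of $\Qut_r(\widetilde{F})$. The resulting three-color graph induces the same tripartition of vertices as $X_2$ (singleton $\{r\}$, original roots, non-roots), and since $\Qut_c$ depends only on the partition induced by the coloring, this gives $\Qut_r(\widetilde{F}) \cong \Qut_c(X_2)$, closing the chain.

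The only substantive obstacle is verifying that $N(r)$ is a union of quantum orbits of $\Qut_r(\widetilde{F})$, and I plan to dispatch this by color refinement. Since $r$ is the unique vertex of color $c_1$ in the rooted coloring, a single iteration of color refinement already separates $N(r)$ from $V(\widetilde{F}) \setminus (N(r) \cup \{r\})$: only vertices in $N(r)$ see the label of $r$ in their neighbor multisets. Subsequent rounds only refine further, so $N(r)$ remains a union of parts of the stable partition, and \autoref{lem:colorrefineorbits} then gives that $N(r)$ is a union of quantum orbits, as required.
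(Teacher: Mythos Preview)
Your proposal is correct and follows essentially the same route as the paper: add an isolated root in a fresh color via \myautoref{lem:modifications}{itm:mod:isolated}, join it to the old roots via \myautoref{lem:modifications}{itm:mod:ST}, and then reconcile the three-color intermediate with the two-color rooted coloring of $\widetilde{F}$ using \myautoref{lem:modifications}{itm:mod:recolor}, justified by the observation that one round of color refinement already isolates $N(r)$. The only cosmetic difference is bookkeeping of color labels---the paper assigns the new root color $c_2$ in $\widetilde{F}$ whereas you stick with the convention of \autoref{def:colored-graph} and handle the relabeling by noting that $\Qut_c$ depends only on the induced partition---but the argument is the same.
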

\begin{proof}
	Let $R \subseteq V(F)$ denote the set of roots of $F$, and write $X_0 := F$ and $X_3 := \widetilde{F}$.
	Without loss of generality, assume that the non-root vertices in $F$ and $\widetilde{F}$ are colored with color $c_0$, the roots in $F$ are colored $c_1$, and the root in $\widetilde{F}$ is colored $c_2$ (with $c_0$, $c_1$ and $c_2$ distinct).
	We transform $X_0$ into $X_3$ using the following sequence of modifications from \autoref{lem:modifications}, which is also depicted in \autoref{fig:forest-to-tree}.
	\begin{figure}[h!t]
		\centering
		\begin{tikzpicture}[scale=1.5]
			\begin{scope}
				% X0
				\fill[mybg,rounded corners=2pt] (-.4,-.9) rectangle ++(2.8,1.7);
				\node[anchor=base east] at (-.5,-.1) {$F = X_0$};
				\nanotree{(0,0)}{l0}{c1}{}
				\nanotree{(.8,0)}{m0}{c1}{}
				\nanotree{(2,0)}{r0}{c1}{}
				\node at (1.4,0) {$\cdots$};
				% downwards arrow from X0 to X1
				\draw[->] (1,-1.05) -- ++(0,-.5);
				% X1
				\fill[mybg,rounded corners=2pt] (-.4,-3.4) rectangle ++(2.8,1.7);
				\node[anchor=base east] at (-.5,-2.6) {$X_1$};
				\nanotree{(0,-2.5)}{l1}{c1}{}
				\nanotree{(.8,-2.5)}{m1}{c1}{}
				\nanotree{(2,-2.5)}{r1}{c1}{}
				\node at (1.4,-2.5) {$\cdots$};
				\draw[c2] (1,-1.85) node[vertex,label=left:$r$] (Ru1) {};
			\end{scope}
			% right arrow from X2 to X3
			\draw[->] (2.6,-2.55) -- ++(.5,0);
			\begin{scope}[xshift=3.7cm]
				% X3
				\fill[mybg,rounded corners=2pt] (-.4,-.9) rectangle ++(2.8,1.7);
				\node[anchor=base west] at (2.5,-.1) {$X_3 = \widetilde{F}$};
				\nanotree{(0,0)}{l3}{c0}{}
				\nanotree{(.8,0)}{m3}{c0}{}
				\nanotree{(2,0)}{r3}{c0}{}
				\node at (1.4,0) {$\cdots$};
				\draw[c2] (1,.65) node[vertex,label=left:$r$] (Ru3) {};
				\draw (Ru3) -- (Rl3)
				      (Ru3) -- (Rm3)
				      (Ru3) -- (Rr3);
				% upwards arrow from X2 to X3
				\draw[<-] (1,-1.05) -- ++(0,-.5);
				% X2
				\fill[mybg,rounded corners=2pt] (-.4,-3.4) rectangle ++(2.8,1.7);
				\node[anchor=base west] at (2.5,-2.6) {$X_2$};
				\nanotree{(0,-2.5)}{l2}{c1}{}
				\nanotree{(.8,-2.5)}{m2}{c1}{}
				\nanotree{(2,-2.5)}{r2}{c1}{}
				\node at (1.4,-2.5) {$\cdots$};
				\draw[c2] (1,-1.85) node[vertex,label=left:$r$] (Ru2) {};
				\draw (Ru2) -- (Rl2)
				      (Ru2) -- (Rm2)
				      (Ru2) -- (Rr2);
			\end{scope}
		\end{tikzpicture}
		\caption{Transforming a forest of rooted trees to a single rooted tree.}
		\label{fig:forest-to-tree}
	\end{figure}
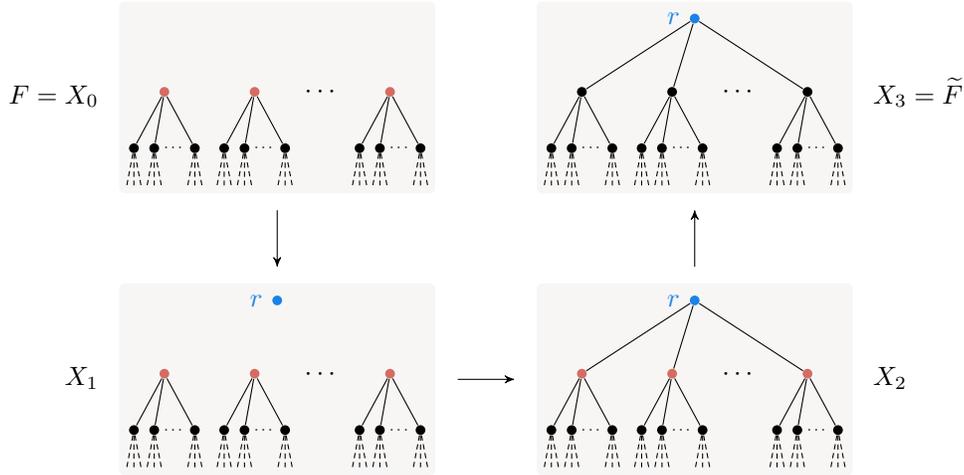
	\begin{itemize}
		\item Let $X_1$ be the graph obtained from $X_0 := F$ by adding an isolated vertex $r$ in a new color $c_2$ (that does not occur elsewhere in $X_0$).
		Then, by \myautoref{lem:modifications}{itm:mod:isolated}, we have $\Qut_c(X_0) \cong \Qut_c(X_1)$.
		
		\item Let $X_2$ be the graph obtained from $X_1$ by connecting all vertices of color $c_1$ (namely, the roots $R$ from $F$) to all vertices of color $c_2$ (namely, the newly added isolated vertex $r$).
		Then, by \myautoref{lem:modifications}{itm:mod:ST}, we have $\Qut_c(X_1) \cong \Qut_c(X_2)$.
		
		\item Finally, let $X_3 := \widetilde{F}$.
		Note that the color-refinement algorithm applied to $X_3 = \widetilde{F}$ distinguishes the roots from $F$ from the rest of the graph $\widetilde{F}$, since they are they only vertices with a neighbour with color $c_2$.
		Therefore $R$ is a union of quantum orbits of $\widetilde{F}$.
		Since $X_2$ is (isomorphic to) the graph obtained from $X_3 = \widetilde{F}$ by recoloring $R$ in a new color $c_1$, it follows from \myautoref{lem:modifications}{itm:mod:recolor} (applied to $X_3$), that $\Qut_c(X_2) \cong \Qut_c(X_3)$.
		\qedhere
	\end{itemize}
\end{proof}

%The following results are versions for vertex-colored graphs of results from \cite{Lupini-Mancinska-Roberson} and \cite{Schmidt-dissertation}. We skip the proof of both results as the proofs are nearly identical to the proofs of the original results.
%\begin{theorem}[{\cite[Theorem 4.5]{Lupini-Mancinska-Roberson}}]
	%Let $X$ and $Y$ be connected vertex colored graphs. Then, $X \cong_{qc} Y$ if and only if there exists $x \in V(X)$ and $y \in V(Y)$ that are in the same quantum orbit of $\Qut_c(X \sqcup Y)$.
%\end{theorem}

%\begin{lemma}[{\cite[Corollary 7.1.4]{Schmidt-dissertation}}]\label{lem:disjoint-union}
%	Let $X_1$ and $X_2$ be connected vertex colored graphs that are not quantum isomorphic. Then,
	%\begin{equation}
		%\Qut_c(X_1 \sqcup X_2) = \Qut_c(X_1) \ast \Qut_c(X_2)
	%\end{equation}
	%where $X_1 \sqcup X_2$ denotes the disjoint union of $X_1$ and $X_2$.
%\end{lemma}

\begin{lemma}[{\cite[Lemma 3.2.2]{Schmidt-dissertation}}]\label{lem:dist-from-vert}
	Let $X$ be a finite, undirected vertex-colored graph and let $(u_{ij})_{1 \leq i,j \leq n}$ be the generators of $C(Qut(X))$. If the distance between $i$ and $k$ is different to the distance between $j$ and $l$, then $u_{ij}u_{kl} = 0$.
\end{lemma}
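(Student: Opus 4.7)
My plan is to prove the contrapositive by strong induction on distances. I would split the statement into two dual claims:
(a) if $d(j,l) > d(i,k)$ then $u_{ij}u_{kl} = 0$;
(b) if $d(i,k) > d(j,l)$ then $u_{ij}u_{kl} = 0$.
Together these yield the lemma, since whenever $d(i,k) \neq d(j,l)$ exactly one of the strict inequalities holds. I would prove (a) by strong induction on $d := d(i,k)$; claim (b) then follows by the mirror argument (or, equivalently, by invoking the $*$-automorphism of $C(\Qut(X))$ sending $u_{ij} \mapsto u_{ji}$, which is well-defined since $A_X$ is symmetric and preserves the color classes).

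For the base cases of (a): when $d = 0$ we have $i = k$, and row orthogonality of the magic unitary gives $u_{ij}u_{il} = \delta_{jl}u_{ij}$, which vanishes whenever $d(j,l) > 0$. When $d = 1$, the edge $ik \in E$ together with the defining edge-preservation relation $u_{ij}u_{kl} = 0$ (whenever $jl \notin E$) handles every case with $d(j,l) \neq 1$, since $d(j,l) = 0$ gives $jl = jj \notin E$ in the absence of self-loops, and $d(j,l) \geq 2$ gives $jl \notin E$ directly.

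For the inductive step with $d \geq 2$ and $d(j,l) > d$, I would choose a vertex $k_1 \sim k$ on a shortest $i$-to-$k$ path, so that $d(i,k_1) = d - 1$, and insert the resolution of unity $\sum_m u_{k_1 m} = 1$ to obtain
\[
    u_{ij} u_{kl} \;=\; \sum_{m} u_{ij} \, u_{k_1 m} \, u_{kl}.
\]
The inductive hypothesis applied to the pair $(i,k_1)$ shows that the left factor $u_{ij} u_{k_1 m}$ is nonzero only if $d(j,m) \leq d - 1$; the edge $k_1 k \in E$ combined with the edge-preservation relation shows that the right factor $u_{k_1 m} u_{kl}$ is nonzero only if $d(m,l) = 1$. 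The triangle inequality then yields $d(j,l) \leq (d-1) + 1 = d$, contradicting $d(j,l) > d$, so every term in the sum must vanish. Claim (b) is handled by the mirror argument: induct on $d(j,l)$, pick $l_1 \sim l$ on a shortest $j$-to-$l$ path with $d(j,l_1) = d(j,l) - 1$, and insert the column sum $\sum_m u_{m l_1} = 1$; the same mixture of edge preservation and the triangle inequality produces the required contradiction.

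The main obstacle is that a single expansion on one side only delivers a one-sided distance bound, which is why the statement has to be split into two dual claims. The fiddly part of the inductive step is verifying that the constraints from the two factors -- the inductive hypothesis on one side and the edge-preservation / magic-unitary identities on the other -- combine to pin down $d(j,m)$ and $d(m,l)$ tightly enough that the triangle inequality gives the required contradiction; in particular one must be careful that the possible ``degenerate'' index $m = l$ is already excluded by the absence of self-loops in $X$.
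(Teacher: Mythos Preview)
The paper does not give its own proof of this lemma; it merely cites \cite[Lemma~3.2.2]{Schmidt-dissertation}. Your argument is correct and is in fact essentially the standard proof found in the cited reference: induct on the distance, insert a row (respectively column) resolution of the identity at a vertex adjacent to an endpoint of a geodesic, and combine the inductive hypothesis on one factor with the edge-preservation relation on the other so that the triangle inequality forces a contradiction. Your handling of the base cases and of the degenerate index $m=l$ is fine, and your remark that claim~(b) follows from~(a) via the transpose $*$\nobreakdash-automorphism $u_{ij}\mapsto u_{ji}$ (which exists by symmetry of $A_X$ and of the coloring relations) is correct; note that taking adjoints alone does \emph{not} swap~(a) and~(b), since $(u_{ij}u_{kl})^*=u_{kl}u_{ij}$ keeps the same row/column pairing, so either the mirror induction or the transpose automorphism is genuinely needed. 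There is nothing further to compare against in the present paper.
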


\begin{lemma}
    Let $X$ be a (vertex-colored) graph with connected components $X_1, \dots, X_n$. If $u_{st}\neq 0$ for some $s\in V(X_i), t\in V(X_j), i\neq j$, in the fundamental representation of $\Qut(X)$, then $X_i$ and $X_j$ are quantum isomorphic.
\end{lemma}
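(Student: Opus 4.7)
The plan is to exhibit a magic unitary indexed by $V(X_i) \times V(X_j)$ with entries in a nonzero unital $C^*$-algebra which commutes with the adjacency matrices of $X_i$ and $X_j$ and respects the vertex colouring; by definition such a magic unitary witnesses $X_i \cong_q X_j$. The natural candidates for the entries are the restrictions of the generators themselves: set $U^{ij} := (u_{s',t'})_{s' \in V(X_i),\, t' \in V(X_j)}$. This rectangular block already consists of self-adjoint projections, is colour-respecting, and satisfies $A_{X_i} U^{ij} = U^{ij} A_{X_j}$, which is an immediate consequence of the block-diagonal form of $A_X$ with respect to the partition into connected components. The sole obstruction to $U^{ij}$ being a magic unitary is that the row sums $q_{s''} := \sum_{t'' \in V(X_j)} u_{s'',t''}$ and column sums $p_{t''} := \sum_{s'' \in V(X_i)} u_{s'',t''}$ are not equal to $1$ in $C(\Qut(X))$.

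The key step is to use \autoref{lem:dist-from-vert} together with the fact that vertices in different connected components are at infinite distance. For $s', s'' \in V(X_i)$, $t' \in V(X_j)$, and $t'' \notin V(X_j)$, one has $d_X(s',s'')<\infty$ but $d_X(t',t'')=\infty$, hence $u_{s',t'}\,u_{s'',t''}=0$. Summing over all $t''\notin V(X_j)$ gives $u_{s',t'}(1-q_{s''})=0$, so $u_{s',t'}q_{s''}=u_{s',t'}$, and taking adjoints yields $q_{s''}u_{s',t'}=u_{s',t'}$. A symmetric argument with the roles of rows and columns reversed produces $u_{s',t'}p_{t''}=p_{t''}u_{s',t'}=u_{s',t'}$ for every $t''\in V(X_j)$. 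In particular every entry of $U^{ij}$ commutes with each $q_{s''}$ and each $p_{t''}$, so the projections $\{q_{s''}\}_{s''\in V(X_i)}\cup\{p_{t''}\}_{t''\in V(X_j)}$ pairwise commute, being sums of entries of $U^{ij}$.

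Define $Q:=\prod_{s''\in V(X_i)}q_{s''}\cdot\prod_{t''\in V(X_j)}p_{t''}$, a well-defined projection by pairwise commutativity. The hypothesis $u_{s,t}\neq 0$ combined with the absorption relations $u_{s,t}q_{s''}=u_{s,t}p_{t''}=u_{s,t}$ gives $u_{s,t}\leq Q$ as projections, so $Q\neq 0$. Work inside the unital corner $Q\,C(\Qut(X))\,Q$ with unit $Q$, and set $v_{s',t'}:=u_{s',t'}Q$ for $s'\in V(X_i)$, $t'\in V(X_j)$. Each $v_{s',t'}$ is a product of commuting projections, hence itself a projection; the row sums give $\sum_{t'\in V(X_j)}u_{s',t'}Q=q_{s'}Q=Q$, and similarly each column sum equals $Q$; the intertwining relation $A_{X_i}v=vA_{X_j}$ is inherited from $A_{X_i}U^{ij}=U^{ij}A_{X_j}$ by right-multiplying every entry by $Q$; and colour-respect passes entry-wise from $u$. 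Hence $v$ is a magic unitary in the nonzero $C^*$-algebra $Q\,C(\Qut(X))\,Q$ implementing a quantum isomorphism $X_i\to X_j$.

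The main obstacle is the commutativity argument of the second paragraph: it is precisely the observation that $u_{s',t'}$ lies \emph{below} the projections $q_{s''}$ and $p_{t''}$ (not merely commuting with them) that forces the partial row and column sums to behave coherently. Once this absorption identity is in hand, the construction of $Q$ and the verification of the magic-unitary axioms inside the corner are essentially formal; all of the nontrivial content is funnelled through \autoref{lem:dist-from-vert} and the basic fact that finite and infinite distances cannot coincide.
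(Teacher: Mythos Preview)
Your proof is correct and follows essentially the same strategy as the paper's: both use \autoref{lem:dist-from-vert} to show that each entry $u_{s',t'}$ of the $(i,j)$-block is absorbed by every row-sum projection $q_{s''}$ and every column-sum projection $p_{t''}$, and then realise the block as a magic unitary in a suitable nonzero unital $C^*$-algebra. The only cosmetic difference is the packaging of the unit: the paper observes directly from $q_{s'}q_{s''}=q_{s'}$ (your absorption summed over $t'$) that all the $q_{s''}$ and $p_{t''}$ coincide with a single projection $p$, and works in the $C^*$-subalgebra generated by the block (where $p$ is the unit), whereas you form the product $Q$ and pass to the corner $Q\,C(\Qut(X))\,Q$. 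Since your absorption identities in fact force all the $q$'s and $p$'s to be equal, your $Q$ is the paper's $p$, so the two arguments are variants of the same proof.
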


\begin{proof}
    We adjust the proof of Theorem 4.4 in \cite{Lupini-Mancinska-Roberson} to our setting. Assume that we have $i,j$, $i\neq j$ such that there exist $s\in V(X_i), t\in V(X_j)$ with $u_{st}\neq 0$. We have $u_{xy}u_{x'y'}=0$ for $x,x'\in V(X_a)$, $y \in V(X_b)$, $y'\in V(X_c)$, $b\neq c$ by \autoref{lem:dist-from-vert}, since we know that there is a path between $x$ and $x'$ as they are in the same connected component but no path between $y$ and $y'$. 
    Define $p_{x,j}=\sum_{y \in V(X_j)} u_{xy}$, $x\in V(X)$. It holds that $p_{x,j}=p_{x',j}$ for $x,x' \in V(X_i)$ because of the following. We have
    \begin{align*}
        p_{x,j}(1-p_{x',j})=\left(\sum_{y \in V(X_j)} u_{xy}\right)\left(\sum_{y' \in V(X_k), k\neq j} u_{x'y'}\right)=\sum_{\substack{y \in V(X_j),\\y' \in V(X_k), k\neq j}} u_{xy}u_{x'y'}=0,
    \end{align*}
    since $u_{xy}u_{x'y'}=0$ for $x,x'\in V(X_i)$, $y \in V(X_j)$, $y'\in V(X_k)$. We get $(1-p_{x,j})p_{x',j}=0$ in the same way and therefore $p_{x,j}=p_{x,j}p_{x',j}=p_{x',j}$. We define $p_j:=p_{x,j}$. Similarly, we can define $p_{y,i}'=\sum_{x \in V(X_i)} u_{xy}$ and get $p_{y,i}'=p_{y',i}'$ for $y,y'\in V(X_j)$. We then define $p_i':=p_{y,i}'$.

    Note that $p_{j}\neq 0$ since we have $p_{j}\geq u_{st}\neq 0$ by assumption. It holds that
    \begin{align}
        |V(X_i)|p_{j}=\sum_{x \in V(X_i)}p_{x,j}=\sum_{\substack{x \in V(X_i)\\y \in V(X_j)}}u_{xy}=\sum_{y \in V(X_j)}p_{y,i}'=|V(X_j)|p_{i}'.\label{eq:samesum}
    \end{align}
    Then we have
    \begin{align*}
        p_{j}=p_{j}^2=\left(\frac{|V(X_j)|}{|V(X_i)|}\right)^2p_{i}'=\frac{|V(X_j)|}{|V(X_i)|}p_{j},
    \end{align*}
    and thus we must have $|V(X_i)| = |V(X_j)|$ since $p_{j} \ne 0$. Therefore, $p_{j}=p_{i}'$ by Equation \eqref{eq:samesum}. We define $p:=p_{j}$.

    Let $\mathcal{A}$ be the $C^*$-subalgebra of $C(\Qut(X))$ generated by the elements $u_{xy}$, $x \in V(X_i)$, $y\in V(X_j)$. Note that $\mathcal{A}$ is nonzero since $p \ne 0$ as noted above. Further, $p$ is the identity in $\mathcal{A}$, since 
    \begin{align*}
        u_{xy}p=u_{xy}p_{x,j}=u_{xy}\sum_{y'\in V(X_j)} u_{xy'}=u_{xy}
    \end{align*}
    and similarly $u_{xy}=pu_{xy}$. Moreover, we have $u_{xy}u_{x'y'}=0$ for $xx'\in E$, $yy'\notin E$ or vice versa, and $u_{xy}=0$ for $c(x)\neq c(y)$, since this was true in $C(\Qut(X))$. Thus, $\hat{u}=(u_{xy})_{x\in V(X_i), y \in V(X_j)}$ is a magic unitary with $A_{X_i}\hat{u}=\hat{u}A_{X_j}$. This means that $X_i$ and $X_j$ are quantum isomorphic. 
\end{proof}

The following lemma is a version of {\cite[Lemma 3.2.2]{Schmidt-dissertation}} for several, not necessarily connected graphs.

\begin{lemma}\label{lem:disjoint-union}
	Let $X_1, \dots, X_n$ be vertex colored graphs such that for any $i \ne j$, no connected component of $X_i$ is quantum isomorphic to a connected component of $X_j$. Then,
	\begin{equation}
		\Qut_c\left(\bigsqcup_{i=1}^n X_i\right) = \bigast_{i=1}^n\Qut_c(X_i) 
	\end{equation}
	where $\bigsqcup_{i=1}^n X_i$ denotes the disjoint union of $X_1, \dots, X_n$.
\end{lemma}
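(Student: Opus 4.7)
My plan is to combine the preceding lemma (on quantum-isomorphic components) with the universal property of $C(\Qut_c(-))$ and of the free product, producing mutually inverse $*$-homomorphisms between $C(\Qut_c(X))$ and $\bigast_{i=1}^n C(\Qut_c(X_i))$, where $X := \bigsqcup_i X_i$.

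First, I would apply the preceding lemma to the graph $X$ decomposed into \emph{all} of its connected components. Each such component is a connected component of exactly one $X_i$. By hypothesis, a component of $X_i$ is never quantum isomorphic to a component of $X_j$ for $i \ne j$, so the contrapositive of the preceding lemma forces $u_{st} = 0$ whenever $s \in V(X_i)$ and $t \in V(X_j)$ with $i \ne j$, where $u$ is the fundamental representation of $\Qut_c(X)$. Hence $u$ is block diagonal with blocks $u^{(i)}$ indexed by $V(X_i) \times V(X_i)$, and since $A_X$ is block diagonal with blocks $A_{X_i}$, the identity $uA_X = A_Xu$ restricts to $u^{(i)} A_{X_i} = A_{X_i} u^{(i)}$. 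The magic-unitary and color-preserving relations for $u^{(i)}$ are inherited from $u$, so $u^{(i)}$ satisfies the defining relations of $\Qut_c(X_i)$.

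By universality of each $C(\Qut_c(X_i))$, the map $v^{(i)}_{st} \mapsto u^{(i)}_{st}$ extends to a $*$-homomorphism, and the universal property of the free product $C^*$-algebra packages these into a single $*$-homomorphism
\[
\Phi \colon \bigast_{i=1}^n C(\Qut_c(X_i)) \longrightarrow C(\Qut_c(X)).
\]
For the reverse direction, form the block diagonal matrix $v := \bigoplus_i v^{(i)}$ inside $\bigast_i C(\Qut_c(X_i))$, where $v^{(i)}$ is the image of the fundamental representation of $\Qut_c(X_i)$ under the canonical embedding into the free product. Block diagonality plus the magic-unitary property of each $v^{(i)}$ makes $v$ a magic unitary, it commutes with the block diagonal matrix $A_X$, and its entries vanish between vertices of different colors (off-diagonal blocks are zero; within a block this is the color relation from $\Qut_c(X_i)$). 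Universality of $C(\Qut_c(X))$ then supplies a $*$-homomorphism $\Psi$ in the opposite direction sending $u_{st}$ to $v_{st}$.

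Finally, $\Phi$ and $\Psi$ act as identity on generators, so they are mutual inverses and $C(\Qut_c(X)) \cong \bigast_i C(\Qut_c(X_i))$ as $C^*$-algebras. To upgrade this to an isomorphism of compact quantum groups I would check compatibility with the comultiplication on generators: for $s,t \in V(X_i)$, block diagonality kills every term with $k \notin V(X_i)$ in the Sweedler sum, giving
\[
\Delta_X(u_{st}) = \sum_{k \in V(X)} u_{sk} \otimes u_{kt} = \sum_{k \in V(X_i)} u_{sk} \otimes u_{kt},
\]
which matches the comultiplication on the $i$-th factor of the free product after applying $\Psi \otimes \Psi$. The only real subtlety is the very first step, correctly invoking the preceding lemma despite the $X_i$ being possibly disconnected; everything afterwards is an application of universal properties and the bookkeeping of block diagonal matrices.
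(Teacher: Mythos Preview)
Your proof is correct and follows essentially the same approach as the paper: use the preceding lemma (applied to the decomposition of $X$ into all connected components) to force the fundamental representation into block diagonal form, then invoke the universal properties on both sides. You are in fact more explicit than the paper, which simply asserts the existence of the surjective $*$-homomorphisms in both directions without spelling out the inverse or the comultiplication check.
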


\begin{proof}
Define $X:=\bigsqcup_{i=1}^n X_i$ and let $A_{X}$ and $A_{X_i}$ be the adjacency matrices of the corresponding graphs. Label $A_{X}$ in the following way
\begin{align*}
A_{X} = \begin{pmatrix} A_{X_1}&0&0&\dots&0\\ 0&A_{X_2}&0&\dots&0\\ 0&0&A_{X_3}&\dots&0\\\vdots &\vdots&\vdots&\ddots&0\\0&0&0&0&A_{X_n} \\\end{pmatrix}.
\end{align*}
Since all connected components of $X_i$ and $X_j$, $i\neq j$ are not quantum isomorphic, we know by the previous lemma $u_{xy} =0$ for all $x \in V(X_i)$, $y \in V(X_j)$, where $u$ is the fundamental representation of $\Qut(X)$. This yields
\begin{align*}
u = \begin{pmatrix} u^{(1)}&0&0&\dots&0\\ 0&u^{(2)}&0&\dots&0\\ 0&0&u^{(3)}&\dots&0\\\vdots &\vdots&\vdots&\ddots&0\\0&0&0&0&u^{(n)} \\\end{pmatrix},
\end{align*}
where $u^{(i)} \in \mathrm{M}_{|V(X_i)|}(C(\Qut(X))$. Furthermore, we see that $uA_X = A_Xu$ is equivalent to $u^{(i)}A_{X_i} = A_{X_i}u^{(i)}$ for all $i$. Therefore, we get the desired surjective $*$-homomorphisms in both directions by using the respective universal properties. 
\end{proof}

%\commentJosse{Do we maybe want to state \autoref{lem:disjoint-union} more generally for a finite collection of connected graphs $X_1,\ldots,X_k$? After all, it seems that the lemma only works for connected graphs, so we cannot use it multiple times if we have more than two connected components (i.e.{} if the root of our rooted tree has more than two children). Is the lemma still true for more than two connected graphs?}

\begin{lemma}\label{lem:tree_to_rooted}
	For every rooted tree there exists a tree with an isomorphic quantum automorphism group.
\end{lemma}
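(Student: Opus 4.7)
I would construct $T'$ by attaching a fresh pendant path of length $e := \ecc_T(r)$ at the root, i.e.\ $T' := T \cup \{r p_1, p_1 p_2, \dots, p_{e-1} p_e\}$ with new vertices $p_1, \dots, p_e$. The plan is to show that $r$ becomes the unique Jordan center of $T'$, so that the rootification results of the previous section apply, and then to peel off the added path using the free-product results on forests.

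The first step would be a short eccentricity calculation verifying that $r$ is the unique Jordan center of $T'$. Indeed, both $p_e$ and the farthest vertex of $T$ from $r$ are at distance $e$ from $r$, so $\ecc_{T'}(r) = e$. For $v \in V(T) \setminus \{r\}$, the distance from $v$ to $p_e$ equals $d_T(v,r) + e \geq e+1$, while for any path vertex $p_i$ with $1 \leq i \leq e$, the distance to a vertex of $T$ attaining the eccentricity of $r$ is $i + e \geq e+1$. So $\ecc_{T'}(w) \geq e+1$ for every $w \neq r$, and $\{r\}$ is the unique Jordan center.

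Having identified $r$ as the unique center, I would chain together the results of the preceding sections. By \autoref{prop:rootification-isomorphism}, $\Qut(T') \cong \Qut_r(T',r)$. By \autoref{lem:tree-to-forest} (read from right to left), this is in turn isomorphic to $\Qut_c(F')$, where $F' := T' \setminus \{r\}$ is the forest of rooted trees at the children of $r$ in $T'$; this forest splits as $F' = F \sqcup P'$, where $F := T \setminus \{r\}$ is the children-forest of $(T,r)$ and $P'$ is the pendant path $p_1 - p_2 - \cdots - p_e$ rooted at $p_1$. Assuming that $P'$ is not quantum-isomorphic to any connected component of $F$, the lemma preceding \autoref{lem:disjoint-union} forces the fundamental representation of $\Qut_c(F')$ to be block-diagonal with respect to the partition $V(F) \sqcup V(P')$, and the block-matrix argument from the proof of \autoref{lem:disjoint-union} then yields $\Qut_c(F') \cong \Qut_c(F) \ast \Qut_c(P')$. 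A direct iteration of the commutation $A_{P'} u = u A_{P'}$ together with the root-color relation shows $\Qut_c(P') = \one$, so the free product collapses to $\Qut_c(F) \cong \Qut_r(T,r)$ by one more application of \autoref{lem:tree-to-forest}, completing the chain.

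The main obstacle I anticipate is the non-quantum-isomorphism hypothesis in the previous paragraph, which by \autoref{lem:qiso2iso} is equivalent to non-isomorphism and fails precisely when $T$ has a pendant path of length $e$ at $r$. The extreme bad case is $T$ itself being a path with $r$ at one endpoint: then $T'$ is a path of length $2e$ centered at $r$, whose quantum automorphism group contains a reflection that is absent from $\Qut_r(T,r) = \one$. To get around this I would replace the straight pendant path by a slightly asymmetric rigid gadget of the same eccentricity at the attachment vertex --- for instance a path of length $e$ with an extra leaf attached at $p_1$ (which keeps the eccentricity of $r$ equal to $e$ since the new leaf sits at distance $2$, and whose rooted subtree now has $e+1$ vertices, immediately ruling out isomorphism with any $e$-vertex path in $F$) --- and, more generally, exhibit a short family of such rigid gadgets from which one can always be chosen to avoid quantum isomorphism with every component of $F$. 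Proving this last point cleanly is the central technical work of the lemma.
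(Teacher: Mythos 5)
Your overall strategy---graft a path onto the root so that $r$ becomes central, invoke \autoref{prop:rootification-isomorphism}, then peel the added path off with \autoref{lem:tree-to-forest} and the free-product machinery of \autoref{lem:disjoint-union}---is exactly the paper's, but your execution has a genuine gap, and it is the one you flag yourself. By choosing the attached path of the \emph{minimal} length $e = \ecc_T(r)$ that makes $r$ central, you create the collision problem (the path component may be isomorphic to a component of the children-forest $F$), and your repair via ``rigid gadgets'' is left unproven: you explicitly defer ``the central technical work of the lemma.'' This deferral is not a formality, because your concrete candidate gadget (a length-$e$ path with an extra leaf at $p_1$) actually fails in edge cases: for $e = 2$, deleting the root $p_1$ of the gadget leaves two isomorphic single-vertex rooted trees, so by \autoref{thm:free-wreath-pro} its rooted quantum automorphism group is $\mathbb{S}_2^+ = \mathbb{S}_2$, not $\one$; for $e = 1$ the extra leaf sits at distance $2 > e$ from $r$, so the eccentricity claim breaks; and for any fixed gadget, nothing prevents $T$ from containing an isomorphic copy among the children of $r$. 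So one would still need to construct, for every $T$, a rigid gadget of the right eccentricity avoiding all components of $F$ (e.g.\ a family of pairwise non-isomorphic rigid gadgets larger than the number of components), and that argument is absent.

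The paper sidesteps all of this by \emph{overshooting} instead of hitting the center exactly: it attaches a path $P$ of length $2|V(T)|$ (joined to the root of $T$ through a new vertex), so the Jordan center of $T'$ is forced into the interior of $P$. After rootification the root $\widetilde{r}$ has degree $2$; deleting it leaves a path $X_4$ rooted at an endpoint (whose rooted quantum automorphism group is $\one$, by iterating \autoref{lem:tree-to-forest}) and a tree $X_3$ consisting of $T$ with a pendant path grafted onto its root, which the same iteration reduces to $\Qut_r(T)$. The only non-(quantum-)isomorphism then needed, via \autoref{lem:qiso2iso} and \autoref{lem:disjoint-union}, is between $X_3$ and the path $X_4$, which holds whenever $X_3$ is not a path; the single exceptional case---$T$ itself a path rooted at an endpoint, which you also identified as the ``extreme bad case''---is handled separately by taking $T' := K_1$, since then $\Qut_r(T) = \one$. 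To complete your proof you should either adopt this long-path trick, which dissolves the collision problem entirely, or genuinely carry out the gadget construction; as written, the proposal is incomplete.
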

\begin{proof}
	Let $T$ be a rooted tree.
	We construct a tree $T'$ with $\Qut_r(T) \cong \Qut(T')$.

	First, suppose that $T$ is a path rooted at one of its endpoints.
	Then, by induction, it follows from \autoref{lem:tree-to-forest} that $\Qut_r(T) \cong \Qut_r(K_1) = \one$.
	Since $\Qut(K_1) = \one$, the choice of $T' := K_1$ suffices.
	
	Now suppose that $T$ is not a path rooted at one of its endpoints.
	Let $X_1$ be the forest of rooted trees given by $X_1 := T \sqcup P$, where $P$ is a path of length $2|V(T)|$ rooted at one of its leaves.
	Let $X_2$ be the rooted tree formed from $X_1$ by adding a new vertex $r$ adjacent to only the roots of $T$ and $P$.
	Let $T'$ be the unrooted tree obtained from $X_2$ by ignoring the root.
	
	From the choice of $P$ and the definition of the centre, it follows that the centre of $T'$ lies on the path $P$.
	Hence, by \autoref{prop:rootification-isomorphism} we may conclude that $\Qut(T') = \Qut_r(T_r')$, where $T_r'$ is the rootification of $T'$.
	Let the $\widetilde{r}$ be the root of $T_r'$.
	We have that $\deg(\widetilde{r}) = 2$.
	Let $X_3$ and $X_4$ be the rooted trees obtained by deleting $\widetilde{r}$ and designating its neighbours as roots.
	Relabelling if necessary, we know that $X_3$ is a rooted tree that is obtained by adjoining a path to the root of $R$ and designating the other end as the root, and $X_4$ is a path.
	It follows from applying \autoref{lem:tree-to-forest} multiple times that $\Qut_r(X_3) = \Qut_r(T)$. Additionally, by the argument for the case where $T$ was a path rooted at one of its endpoints, we know that $\Qut_r(X_4) = \one$. 
	Finally, from \autoref{lem:tree-to-forest} and \autoref{lem:disjoint-union}, we may conclude that $\Qut(T') = \Qut_r(T_r') = \Qut_r(X_3) \ast \Qut_r(X_4) = \Qut_r(T)$.
	This finishes the proof.
\end{proof}

To prove our main result, we need the quantum automorphism group of $n$ copies of the same vertex-colored graph. This theorem was proven for uncolored graphs in \cite[Theorem 4.2]{Bichon-free-wreath-product}, for colored graphs it is a special case of \cite[Theorem 6.1]{Banica-Bichon-free-product}. 

\begin{theorem}[{\cite[Theorem 6.1]{Banica-Bichon-free-product}}]\label{thm:free-wreath-pro}
	Let $X$ be a connected vertex colored graph and $n \in \mathbb{N}$. Let $\bigsqcup_{i=1}^n X$ denote the disjoint union of $n$ copies of $X$, all with the same coloring. Then, $\Qut_c(\bigsqcup_{i=1}^n X) = \Qut_c(X) \wr_* \mathbb{S}_n^+$, where $\wr_*$ denotes the free wreath product.
\end{theorem}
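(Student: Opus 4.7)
The plan is to establish the isomorphism $\Qut_c(\bigsqcup_{a=1}^n X) \cong \Qut_c(X) \wr_* \mathbb{S}_n^+$ by constructing mutually inverse unital $*$-homomorphisms that send fundamental representations to fundamental representations; by universality these will automatically be compatible with the comultiplications.

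For the forward direction, I would show that the free wreath product acts on $\bigsqcup_{a=1}^n X$. Writing $u^{(a)}_{ij}$ and $v_{ab}$ for the generators of $C(\Qut_c(X) \wr_* \mathbb{S}_n^+)$, the matrix $w_{(i,a),(j,b)} := u^{(a)}_{ij} v_{ab}$ is a magic unitary on $V(X) \times [n]$ by the defining properties of the free wreath product; it respects the coloring of $\bigsqcup_{a} X$ because each $u^{(a)}$ respects the coloring of $X$; and it commutes with the block-diagonal adjacency matrix $A_{\bigsqcup_a X}$ because each $u^{(a)}$ commutes with $A_X$ and each $u^{(a)}_{ij}$ commutes with $v_{ab}$. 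Universality of $\Qut_c(\bigsqcup_a X)$ then yields a surjection $\psi : C(\Qut_c(\bigsqcup_a X)) \twoheadrightarrow C(\Qut_c(X) \wr_* \mathbb{S}_n^+)$ sending $\tilde w_{(i,a),(j,b)} \mapsto u^{(a)}_{ij} v_{ab}$.

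For the reverse direction I would decompose the fundamental representation $\tilde w$ of $\Qut_c(\bigsqcup_a X)$ into wreath-product form. Fix $a, b \in [n]$ and $i_0 \in V(X)$, and set $V_{ab}^{i_0} := \sum_{j \in V(X)} \tilde w_{(i_0, a),(j, b)}$. The crucial claim is that $V_{ab}^{i_0}$ does not depend on $i_0$: since $X$ is connected, the distance between $(i_0, a)$ and $(i_0', a)$ in $\bigsqcup_a X$ is finite while the distance between $(j, c)$ and $(j', c')$ is infinite whenever $c \ne c'$, so \autoref{lem:dist-from-vert} forces $\tilde w_{(i_0,a),(j,c)} \tilde w_{(i_0',a),(j',c')} = 0$ for $c \ne c'$. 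Summing over $j, j'$ and $c' \ne b$ and using that the row of $\tilde w$ indexed by $(i_0', a)$ sums to $1$ gives $V_{ab}^{i_0}(1 - V_{ab}^{i_0'}) = 0$, and the symmetric identity yields $V_{ab}^{i_0} = V_{ab}^{i_0'}$. A mirror-image argument shows the common value $v_{ab}$ also equals $\sum_i \tilde w_{(i,a),(j_0,b)}$ for any $j_0$, and combining both formulas shows $(v_{ab})_{a,b \in [n]}$ is a magic unitary: each $v_{ab}$ is a sum of mutually orthogonal projections from a single row of $\tilde w$, and both row and column sums equal $1$.

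Finally, I would set $u^{(a)}_{ij} := \sum_{b=1}^n \tilde w_{(i,a),(j,b)}$. Using row-orthogonality of $\tilde w$, one computes $u^{(a)}_{ij} v_{ab} = \tilde w_{(i,a),(j,b)} = v_{ab} u^{(a)}_{ij}$, which simultaneously establishes the commutation $[u^{(a)}_{ij}, v_{ab}] = 0$ and the factorization recovering $\tilde w$. Each $u^{(a)}$ is a magic unitary (rows sum to $1$ trivially; columns to $\sum_b v_{ab} = 1$), respects the coloring of $X$ because $\tilde w$ respects that of $\bigsqcup_a X$, and commutes with $A_X$ by summing the $(i,a),(j,b)$-entries of $A_{\bigsqcup_a X} \tilde w = \tilde w A_{\bigsqcup_a X}$ over $b$. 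Universality of the wreath product then yields the reverse surjection $\phi : C(\Qut_c(X) \wr_* \mathbb{S}_n^+) \twoheadrightarrow C(\Qut_c(\bigsqcup_a X))$, and since $\phi \circ \psi$ and $\psi \circ \phi$ act as the identity on the respective sets of generators, they are mutually inverse. The main obstacle is the independence of $V_{ab}^{i_0}$ from $i_0$, which is where connectedness of $X$ is used crucially via \autoref{lem:dist-from-vert}; the remaining verifications are routine accounting with magic unitaries.
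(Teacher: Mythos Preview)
The paper does not give its own proof of this theorem; it is quoted as \cite[Theorem~6.1]{Banica-Bichon-free-product}, with the uncolored case attributed to \cite[Theorem~4.2]{Bichon-free-wreath-product}. So there is nothing in the paper to compare your argument against directly.

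That said, your argument is correct and is in fact the standard proof going back to Bichon. The forward map is immediate from the definition of the free wreath product. For the reverse map, the crux is exactly where you place it: showing that the ``block projections'' $V_{ab}^{i_0}=\sum_j \tilde w_{(i_0,a),(j,b)}$ are independent of $i_0$. Your use of \autoref{lem:dist-from-vert} together with connectedness of $X$ is precisely Bichon's mechanism (stated there in terms of the spectral decomposition of the adjacency matrix, but equivalent). The subsequent definitions $v_{ab}$ and $u^{(a)}_{ij}=\sum_b\tilde w_{(i,a),(j,b)}$, the verification that $u^{(a)}_{ij}v_{ab}=v_{ab}u^{(a)}_{ij}=\tilde w_{(i,a),(j,b)}$ via row-orthogonality, and the checks that $(v_{ab})$ and each $u^{(a)}$ are color-preserving magic unitaries commuting with the relevant adjacency matrices are all correct as written. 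The only thing you left implicit is surjectivity of $\psi$, but this is automatic once you observe $\sum_j u^{(a)}_{ij}v_{ab}=v_{ab}$ and $\sum_b u^{(a)}_{ij}v_{ab}=u^{(a)}_{ij}$, so the image of $\psi$ contains all generators.
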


\begin{proof}[Proof of \autoref{thm:main_theorem}]
	Let $\mathcal{S}$ be the family of compact quantum groups that is generated by \emph{(i)--(iii)}, and let $\mathcal{R}$ denote the set of all quantum automorphism groups of rooted trees. Recall that $\mathcal{T}$ denotes the set of all quantum automorphism groups of trees.
	It follows from \autoref{prop:rootification-isomorphism} and \autoref{lem:tree_to_rooted} that $\mathcal{T} = \mathcal{R}$.
	Hence, it is sufficient to show that $\mathcal{S} = \mathcal{R}$.
	
	First, we show that $\mathcal{S} \subseteq \mathcal{R}$. We do this by showing that $\one \in \mathcal{R}$ (which is trivial) and that $\mathcal{R}$ is closed under the operations described in \emph{(ii)} and \emph{(iii)} in the theorem statement. For \emph{(ii)}, we split it into the two cases of $\mathbb{G} \not\cong \mathbb{H}$ and $\mathbb{G} \cong \mathbb{H}$. For the former, consider any two non-isomorphic elements $\mathbb{G}, \mathbb{H} \in \mathcal{R}$.
	Let $T_{\mathbb{G}}$ and $T_{\mathbb{H}}$ be two rooted trees such that $\Qut(T_{\mathbb{G}}) \cong \mathbb{G}$ and $\Qut_r(T_{\mathbb{H}}) \cong \mathbb{H}$.
	Let us denote the rooted tree constructed by joining the roots of $T_{\mathbb{G}}$ and $T_{\mathbb{H}}$ to a single new root by $T_{\mathbb{G} \sqcup \mathbb{H}}$.
	It follows from \autoref{lem:tree-to-forest} that $\Qut_r(T_{\mathbb{G} \sqcup \mathbb{H}}) \cong \Qut_c(T_{\mathbb{G}} \sqcup T_{\mathbb{H}})$.
	Since $\mathbb{G}$ and $\mathbb{H}$ are distinct, $T_{\mathbb{G}}$ and $T_{\mathbb{H}}$ are not isomorphic, and hence they are also not quantum isomorphic by \autoref{lem:qiso2iso}.
	It now follows from \autoref{lem:disjoint-union} that $\Qut_r(T_{\mathbb{G} \sqcup \mathbb{H}}) \cong \mathbb{G} \ast \mathbb{H}$, so that $\mathbb{G} \ast \mathbb{H} \in \mathcal{R}$.
	
	Now we consider the $\mathbb{G} \cong \mathbb{H}$ case. Here we may assume that $\mathbb{G} = \mathbb{H}$ in fact. Let $\mathbb{G} \in \mathcal{R}$ be a compact quantum group, and let $T_{\mathbb{G}}$ be a rooted tree such that $\Qut_r(T_{\mathbb{G}}) \cong \mathbb{G}$.
	Let $\widetilde{T_{\mathbb{G}}}$ be the rooted tree constructed by joining the root of $T_{\mathbb{G}}$ to a new vertex, which is designated as the root of $\widetilde{T_{\mathbb{G}}}$.
	It follows from \autoref{lem:tree-to-forest} that $\Qut_r(\widetilde{T_{\mathbb{G}}}) \cong \mathbb{G}$.
	Since $T_{\mathbb{G}}$ and $\widetilde{T_{\mathbb{G}}}$ have a different number of vertices, they are not quantum isomorphic.
	Let $T$ be the rooted tree that is constructed by joining the roots of $T_{\mathbb{G}}$ and $\widetilde{T_\mathbb{G}}$ to a new vertex that is designated to be the root of $T$.
	Then, from \autoref{lem:tree-to-forest} and \autoref{lem:disjoint-union}, we have that $\Qut_r(T) \cong \Qut_c(T_{\mathbb{G}} \sqcup \widetilde{T_{\mathbb{G}}}) \cong \mathbb{G} \ast \mathbb{G}$, so that $\mathbb{G} \ast \mathbb{G} \in \mathcal{R}$.
	
	Let $\mathbb{G} \in \mathcal{R}$, and let $T_{\mathbb{G}}$ be a rooted tree such that $\Qut_r(T_{\mathbb{G}}) \cong \mathbb{G}$.
	It follows from \autoref{thm:free-wreath-pro} that $\Qut_c(\bigsqcup_{i=1}^n T_{\mathbb{G}}) \cong \mathbb{G} \wr_* \mathbb{S}_n^+$ for any $n \in \mathbb{N}$.
	Let $T'$ be the rooted tree that is constructed from the forest of rooted trees $\bigsqcup_{i=1}^n T_{\mathbb{G}}$ by joining the roots of individual copies of $T_{\mathbb{G}}$ to a single new root.
	Then, it follows from \autoref{lem:tree-to-forest} that $\Qut_r(T') \cong \Qut_c(\bigsqcup_{i=1}^n T_{\mathbb{G}}) \cong  \mathbb{G} \wr_* \mathbb{S}_n^+$, so that $\mathbb{G} \wr_* \mathbb{S}_n^+ \in \mathcal{R}$.
	Hence, we may conclude that any compact quantum group that is inductively constructed using \emph{(i)--(iii)} is the quantum automorphism group of a rooted tree, which implies that $\mathcal{S} \subseteq \mathcal{R}$.
	
	Now we show that $\mathcal{R} \subseteq \mathcal{S}$, i.e., that for any rooted tree $T$, $\Qut(T)$ can be constructed iteratively through \emph{(i)--(iii)}.
	Let $\widetilde{T}$ be the forest of rooted trees that is constructed by deleting the root of $T$, and designating the neighbours of $T$ as the roots of the trees that are formed.
	If we have $n$ equivalence classes of rooted trees (where two trees are equivalent if they are isomorphic), and the $i$th equivalence class has $m_i$ isomorphic copies of the rooted tree $T_i$, it follows from \autoref{thm:free-wreath-pro}, \autoref{lem:disjoint-union}, and \autoref{lem:tree-to-forest} that $\Qut_r(T) \cong \bigast_{i=1}^n (\Qut_r(T_i) \wr_* \mathbb{S}_{m_i}^+)$. 
 
	Now, we can iteratively apply the same deconstruction to the rooted trees $\{T_i\}_{i=1}^n$ until we end up with trees with only one vertex and no edges, whose quantum automorphism group is the trivial quantum group $\one$.
	Hence, $\Qut_r(T)$ can be constructed iteratively using \emph{(i)--(iii)}.
	We have now proved that $\mathcal{R} \subseteq \mathcal{S}$.
\end{proof}

\begin{figure}\label{fig:tree_example}
  \centering
  \includegraphics{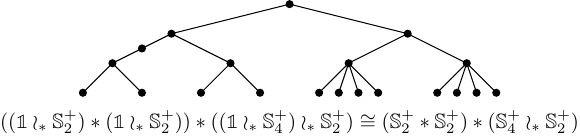}
  \caption{An example of a tree with its quantum automorphism group.}
\end{figure}

\paragraph{\textbf{Acknowledgments.}}\phantom{a}\newline
\noindent JvDdB, PK, DR, and PZ are supported by supported by the Carlsberg
    Foundation Young Researcher Fellowship CF21-0682 -- ``Quantum Graph Theory".
    
\noindent S.S.~was funded by the Deutsche Forschungsgemeinschaft (DFG, German Research Foundation) under Germany's Excellence Strategy - EXC 2092 CASA - 390781972.

\bibliographystyle{alphaurl}
\bibliography{quantum-automorphisms}

\newcommand{\etalchar}[1]{$^{#1}$}
\begin{thebibliography}{AMR{\etalchar{+}}19}

\bibitem[AMR{\etalchar{+}}19]{atserias2019quantum}
Albert Atserias, Laura Man{\v{c}}inska, David~E Roberson, Robert
  {\v{S}}{\'a}mal, Simone Severini, and Antonios Varvitsiotis.
\newblock Quantum and non-signalling graph isomorphisms.
\newblock {\em Journal of Combinatorial Theory, Series B}, 136:289--328, 2019.

\bibitem[Ban05]{banica_qut}
Teodor Banica.
\newblock Quantum automorphism groups of homogeneous graphs.
\newblock {\em Journal of Functional Analysis}, 224(2):243--280, 2005.

\bibitem[BB07]{Banica-Bichon-free-product}
Teodor Banica and Julien Bichon.
\newblock Free product formulae for quantum permutation groups.
\newblock {\em J. Inst. Math. Jussieu}, 6(3):381--414, 2007.
\newblock \href {https://doi.org/10.1017/S1474748007000072}
  {\path{doi:10.1017/S1474748007000072}}.

\bibitem[BBC07]{banica_cube}
Teodor Banica, Julien Bichon, and Beno\^{i}t Collins.
\newblock The hyperoctahedral quantum group.
\newblock {\em J. Ramanujan Math. Soc.}, 22(4):345--384, 2007.

\bibitem[BF18]{banica2018modeling}
Teodor Banica and Amaury Freslon.
\newblock Modeling questions for quantum permutations.
\newblock {\em Infinite Dimensional Analysis, Quantum Probability and Related
  Topics}, 21(02):1850009, 2018.

\bibitem[Bic03]{QBic}
Julien Bichon.
\newblock Quantum automorphism groups of finite graphs.
\newblock {\em Proc. Amer. Math. Soc.}, 131(3):665--673, 2003.

\bibitem[Bic04]{Bichon-free-wreath-product}
Julien Bichon.
\newblock Free wreath product by the quantum permutation group.
\newblock {\em Algebras and Representation Theory}, 7(4):343--362, 2004.
\newblock \href {https://doi.org/10.1023/B:ALGE.0000042148.97035.ca}
  {\path{doi:10.1023/B:ALGE.0000042148.97035.ca}}.

\bibitem[Ful06]{Fulton-dissertation}
Melanie~B. Fulton.
\newblock The quantum automorphism group and undirected trees, 2006.
\newblock PhD thesis, Virginia Tech.
\newblock URL: \url{https://hdl.handle.net/10919/28405}.

\bibitem[GKMS17]{grohe2017color}
Martin Grohe, Kristian Kersting, Martin Mladenov, and Pascal Schweitzer.
\newblock Color refinement and its applications.
\newblock {\em Van den Broeck, G.; Kersting, K.; Natarajan, S}, 30, 2017.

\bibitem[Gro21]{gromada_qut}
Daniel Gromada.
\newblock Quantum symmetries of cayley graphs of abelian groups.
\newblock {\em arXiv:2106.08787}, 2021.

\bibitem[IL90]{Immerman-Lander-1990}
Neil Immerman and Eric Lander.
\newblock Describing graphs: A first-order approach to graph canonization.
\newblock In Alan~L. Selman, editor, {\em Complexity Theory Retrospective: In
  Honor of {J}uris {H}artmanis on the Occasion of His Sixtieth Birthday, July
  5, 1988}, pages 59--81. Springer New York, New York, NY, 1990.
\newblock \href {https://doi.org/10.1007/978-1-4612-4478-3_5}
  {\path{doi:10.1007/978-1-4612-4478-3_5}}.

\bibitem[Jor69]{Jordan-center}
Camille Jordan.
\newblock Sur les assemblages de lignes.
\newblock {\em Journal f{\"u}r die reine und angewandte {M}athematik},
  70:185--190, 1869.
\newblock \href {https://doi.org/10.1515/crll.1869.70.185}
  {\path{doi:10.1515/crll.1869.70.185}}.

\bibitem[JSW20]{simon_all_trees}
Luca Junk, Simon Schmidt, and Moritz Weber.
\newblock Almost all trees have quantum symmetry.
\newblock {\em Archiv der Mathematik}, 115:367--378, 2020.

\bibitem[LMR20]{Lupini-Mancinska-Roberson}
Martino Lupini, Laura Man{\v{c}}inska, and David~E. Roberson.
\newblock Nonlocal games and quantum permutation groups.
\newblock {\em Journal of Functional Analysis}, 279(5):\#108592 (44pp.), 2020.
\newblock \href {https://doi.org/10.1016/j.jfa.2020.108592}
  {\path{doi:10.1016/j.jfa.2020.108592}}.

\bibitem[NT13]{neshveyev_tuset}
Sergey Neshveyev and Lars Tuset.
\newblock {\em Compact quantum groups and their representation categories},
  volume~20 of {\em Cours Sp\'{e}cialis\'{e}s [Specialized Courses]}.
\newblock Soci\'{e}t\'{e} Math\'{e}matique de France, Paris, 2013.

\bibitem[P{\'o}l37]{polya_trees}
George P{\'o}lya.
\newblock Kombinatorische anzahlbestimmungen f{\"u}r gruppen, graphen und
  chemische verbindungen.
\newblock {\em Acta Math.}, 68(1):145--254, 1937.

\bibitem[Sch20a]{Schmidt-dissertation}
Simon Schmidt.
\newblock Quantum automorphism groups of finite graphs, 2020.
\newblock PhD thesis, Universit{\"a}t des Saarlandes.
\newblock \href {https://doi.org/10.22028/D291-31806}
  {\path{doi:10.22028/D291-31806}}.

\bibitem[Sch20b]{simon_folded_cube}
Simon Schmidt.
\newblock Quantum automorphisms of folded cube graphs.
\newblock {\em Annales de l'Institut Fourier}, 70(3):949--970, 2020.

\bibitem[Wan98]{wang}
Shuzhou Wang.
\newblock Quantum symmetry groups of finite spaces.
\newblock {\em Communications in Mathematical Physics}, 195(1):195--211, 1998.

\end{thebibliography}

\end{document}